\documentclass[12pt,a4paper,onseside]{article}

\textwidth=170mm
\textheight=250mm
\oddsidemargin=-5.4mm
\topmargin=-15.4mm

\usepackage{amsmath,amsthm,amssymb}
\usepackage{fourier}

\newtheorem{theorem}{Theorem}[section]
\newtheorem{corollary}[theorem]{Corollary}
\newtheorem{lemma}[theorem]{Lemma}
\newtheorem{proposition}[theorem]{Proposition}
\theoremstyle{definition}

\numberwithin{equation}{section}

%
%
%
%
%

%
%
%
%

%
%
%
%

\usepackage{tikz} 
\usepackage{bbm} 
\usepackage{url} 

\allowdisplaybreaks[4] 

\newcommand{\one}{\mathbbm{1}}
\newcommand{\ZZ}{\mathbb{Z}}
\newcommand{\RR}{\mathbb{R}}
\newcommand{\CC}{\mathbb{C}}
\newcommand{\NN}{\mathbb{N}}
\newcommand{\cO}{\mathcal{O}}
\newcommand{\cD}{\mathcal{D}}
\newcommand{\cH}{\mathcal{H}}
\newcommand{\cG}{\mathcal{G}}
\newcommand{\TT}{\mathbb{T}}
\DeclareMathOperator{\supp}{\mathrm{supp}}


\begin{document}

\title{\Large\bf Laplacian eigenvalues with a large negative Robin parameter\\ on a part of the boundary}

\author{Konstantin Pankrashkin\\[\medskipamount]
	\small Carl von Ossietzky Universit\"at Oldenburg, Fakult\"at V, Institut f\"ur Mathematik\\
	\small 26111 Oldenburg, Germany\\[\smallskipamount]
	\small E-Mail: \url{konstantin.pankrashkin@uol.de}, Webpage: \url{http://uol.de/pankrashkin}
}

\date{}

\maketitle

\begin{abstract}
	\noindent We study the Laplacian eigenvalues for smooth planar domains with strongly attractive Robin conditions imposed on a part of the boundary and Neumann condition on the remaining boundary.
	The asymptotic behavior of individual eigenvalues is described in terms of an effective Schr\"odinger-type operator on an interval with boundary conditions at the endpoints. For some typical geometries a more precise 	asymptotics in terms of the boundary curvature is derived.
\end{abstract}


\maketitle

\section{Introduction}

Let $\Omega\subset\RR^d$ be an open set with a reasonably regular boundary $\partial\Omega$ (for example, bounded and Lipschitz) and an outer unit normal $\nu$. By a Robin Laplacian with negative boundary parameter in $\Omega$
one usually means the operator $Q_{p,\alpha}$ in $L^2(\Omega)$ acting as $u\mapsto-\Delta u$ on the functions $u$ satisfying the Robin boundary condition $\partial_\nu u=\alpha p u$, where $p\ge 0$ is a given function (for example, bounded) and $\alpha>0$ is a parameter. The name ``negative'' is justified by the fact that for all $u$ in the operator domain one has
\[
\langle u,Q_{p,\alpha} u\rangle_{L^2(\Omega)}=\int_\Omega|\nabla u|^2\, dx-\alpha\int_{\partial \Omega} p|u|^2\,dS,
\]
where $dS$ means the integration with respect to the hypersurface measure, so that the boundary term
makes a negative contribution for $p\not\equiv 0$  (often this is also termed as an attractive boundary condition). The structure of the above expression suggests that in the limit $\alpha\to+\infty$ the boundary might play a central role in the asymptotic behavior of the eigenvalues $E_n(Q_{p,\alpha})$. The intuitive
expectation was made rigorous by Lacey, Ockedon, Sabina \cite{los} and Levitin, Parnovski \cite{LP}, who showed
that the main term in the asymptotic expansion of the first eigenvalue of $Q_{p,\alpha}$ is determined
by the singularities of the boundary. For smooth domains and $p\equiv 1$ the first eigenvalue behaves always
as $E_1(Q_{1,\alpha})\sim -\alpha^2$, so it is reasonable to look at the higher eigenvalues and
at the next terms in the asymptotics. Pankrashkin \cite{nano13} and Exner, Minakov, Parnovski \cite{emp}
showed that in the two-dimensional case ($d=2$) the next term for individual eigenvalues is determined by the maximum curvature of the boundary. It was asked if it is possible to analyze the gaps between individual eigenvalues as well.  A major step in this direction
was made by Helffer, Kachmar \cite{HK}. Roughly speaking, they observed that for any fixed number $n\in\NN$
one has $E_n(Q_{1,\alpha})=-\alpha^2+E_n(\Lambda_\alpha) +\text{small relative error}$ with an effective operator $\Lambda_\alpha$ on the boundary given by $\Lambda_\alpha:=-\partial_s^2-\alpha k$, where $\partial_s$ is the differentiation with respect to the arclength and $k$ is the curvature. Formally,
they considered the case of a curvature having a single non-degenerate maximum, which lead to the eigenvalue spacing of order $\sqrt{\alpha}$. The idea of an effective operator and a semiclassical reduction were then extended to a variety of situations including the multi-dimensional  case \cite{PP}, Weyl asymptotics \cite{kkr},
tunneling problems \cite{HKR,hp}, domains with boundary singularities \cite{kobp, PV} and cusps \cite{kop2,vogel}. The study of Robin Laplacians also lead to important advances
in isoperimetric spectral problems \cite{DFNT,fk,kl} and added new ingredients to the analysis of magnetic operators \cite{fahs,HKR2}. We refer to the reviews in \cite{DFK,kobp} for a summary of most available results.

While a significant amount of work was done on the analysis of the case $p\equiv 1$, no detailed asymptotic results seem known  for non-constant $p$. In the present work we are making a first step in this direction
by considering the case when $p$ is the indicator function of a subset $\Gamma\subset\partial\Omega$. This corresponds to the situation when the parameter-dependent Robin condition is imposed on $\Gamma$ only, while the rest of the boundary is endowed with Neumann condition. From now on let $d=2$ and $\Omega\subset \RR^2$ be a simply connected domain with a $C^4$-smooth boundary $\partial \Omega$ of length $L$.
Let $\Gamma\subset\partial\Omega$ be a open connected set of length $\ell\in (0,L)$.
For $\alpha\in\RR$, denote by $Q_\alpha$ the self-adjoint operator in $L^2(\Omega)$
acting as $Q^\Omega_\alpha u =-\Delta u$ on the functions $u$ satisfying the boundary condition
\[
\partial_\nu u =\alpha u \text{ on } \Gamma,
\quad
\partial_\nu u =0 \text{ on } \partial\Omega\setminus{\overline \Gamma},
\]
where $\nu$ is the outer unit normal at $\partial\Omega$. More precisely, $Q^\Omega_\alpha$ is
the self-adjoint operator in $L^2(\Omega)$ associated with
the closed hermitian sesquilinear form $q_\alpha$ defined on
$\cD(q_\alpha)=H^1(\Omega)$ by
\[
q_\alpha(u,u)=\int_{\Omega} |\nabla u|^2\, dx -\alpha \int_\Gamma |u|^2\,dS,
\]
where $d S$ stands for the integration with respect to the arclength.
The operator $Q_\alpha$ has compact resolvent, and for each fixed $n\in\NN$ we are interested in the asymptotic behavior
of its $n$-th eigenvalue $E_n(Q_\alpha)$ for $\alpha\to+\infty$.

\begin{figure}
	
	\tikzset{every picture/.style={line width=0.75pt}} 
	
	\centering
	\scalebox{0.7}{\begin{tikzpicture}[x=0.75pt,y=0.75pt,yscale=-1,xscale=1]
			
			\draw  [color={rgb, 255:red, 155; green, 155; blue, 155 }  ,draw opacity=1 ][fill={rgb, 255:red, 155; green, 155; blue, 155 }  ,fill opacity=0.34 ][line width=2.25]  (121.5,86) .. controls (141.5,76) and (87.5,-16) .. (222.5,35) .. controls (357.5,86) and (299.5,113) .. (248.5,132) .. controls (197.5,151) and (57.5,139) .. (37.5,109) .. controls (17.5,79) and (101.5,96) .. (121.5,86) -- cycle ;
			\draw [line width=3]    (137.5,24) .. controls (176.5,2) and (314.5,70) .. (304.5,91) ;
			\draw  [fill={rgb, 255:red, 0; green, 0; blue, 0 }  ,fill opacity=1 ] (298,91) .. controls (298,87.41) and (300.91,84.5) .. (304.5,84.5) .. controls (308.09,84.5) and (311,87.41) .. (311,91) .. controls (311,94.59) and (308.09,97.5) .. (304.5,97.5) .. controls (300.91,97.5) and (298,94.59) .. (298,91) -- cycle ;
			\draw  [fill={rgb, 255:red, 0; green, 0; blue, 0 }  ,fill opacity=1 ] (131,24) .. controls (131,20.41) and (133.91,17.5) .. (137.5,17.5) .. controls (141.09,17.5) and (144,20.41) .. (144,24) .. controls (144,27.59) and (141.09,30.5) .. (137.5,30.5) .. controls (133.91,30.5) and (131,27.59) .. (131,24) -- cycle ;
			\draw [line width=1.5]    (46.75,91) -- (40.24,51.46) ;
			\draw [shift={(39.75,48.5)}, rotate = 80.65] [color={rgb, 255:red, 0; green, 0; blue, 0 }  ][line width=1.5]    (14.21,-4.28) .. controls (9.04,-1.82) and (4.3,-0.39) .. (0,0) .. controls (4.3,0.39) and (9.04,1.82) .. (14.21,4.28)   ;
			\draw  [fill={rgb, 255:red, 0; green, 0; blue, 0 }  ,fill opacity=1 ] (40.25,91) .. controls (40.25,87.41) and (43.16,84.5) .. (46.75,84.5) .. controls (50.34,84.5) and (53.25,87.41) .. (53.25,91) .. controls (53.25,94.59) and (50.34,97.5) .. (46.75,97.5) .. controls (43.16,97.5) and (40.25,94.59) .. (40.25,91) -- cycle ;
			\draw [color={rgb, 255:red, 155; green, 155; blue, 155 }  ,draw opacity=1 ]   (105.5,88.5) -- (82.25,90) ;
			\draw [shift={(82.25,90)}, rotate = 356.31] [color={rgb, 255:red, 155; green, 155; blue, 155 }  ,draw opacity=1 ][line width=0.75]    (17.64,-3.29) .. controls (13.66,-1.4) and (10.02,-0.3) .. (6.71,0) .. controls (10.02,0.3) and (13.66,1.4) .. (17.64,3.29)(10.93,-3.29) .. controls (6.95,-1.4) and (3.31,-0.3) .. (0,0) .. controls (3.31,0.3) and (6.95,1.4) .. (10.93,3.29)   ;
			\draw [color={rgb, 255:red, 155; green, 155; blue, 155 }  ,draw opacity=1 ]   (90.25,133.5) -- (111.75,137) ;
			\draw [shift={(111.75,137)}, rotate = 189.25] [color={rgb, 255:red, 155; green, 155; blue, 155 }  ,draw opacity=1 ][line width=0.75]    (17.64,-4.9) .. controls (13.66,-2.3) and (10.02,-0.67) .. (6.71,0) .. controls (10.02,0.67) and (13.66,2.3) .. (17.64,4.9)(10.93,-4.9) .. controls (6.95,-2.3) and (3.31,-0.67) .. (0,0) .. controls (3.31,0.67) and (6.95,2.3) .. (10.93,4.9)   ;
			\draw [color={rgb, 255:red, 155; green, 155; blue, 155 }  ,draw opacity=1 ]   (243.25,133.5) -- (265.75,125.5) ;
			\draw [shift={(265.75,125.5)}, rotate = 160.43] [color={rgb, 255:red, 155; green, 155; blue, 155 }  ,draw opacity=1 ][line width=0.75]    (17.64,-4.9) .. controls (13.66,-2.3) and (10.02,-0.67) .. (6.71,0) .. controls (10.02,0.67) and (13.66,2.3) .. (17.64,4.9)(10.93,-4.9) .. controls (6.95,-2.3) and (3.31,-0.67) .. (0,0) .. controls (3.31,0.67) and (6.95,2.3) .. (10.93,4.9)   ;
			
			\draw (318,82.4) node [anchor=north west][inner sep=0.75pt]    {$\gamma ( 0)$};
			\draw (94.5,14.4) node [anchor=north west][inner sep=0.75pt]    {$\gamma ( \ell )$};
			\draw (256,20.4) node [anchor=north west][inner sep=0.75pt]  [font=\large]  {$\Gamma $};
			\draw (48.75,94.4) node [anchor=north west][inner sep=0.75pt]    {$\gamma ( s)$};
			\draw (46.25,34.4) node [anchor=north west][inner sep=0.75pt]    {$\nu ( s)$};
			\draw (174.5,76.4) node [anchor=north west][inner sep=0.75pt]  [font=\Large]  {$\Omega $};

		\end{tikzpicture}
}
	
	\caption{The domain $\Omega$ and the set $\Gamma\subset\partial\Omega$.}\label{fig1}
	
\end{figure}
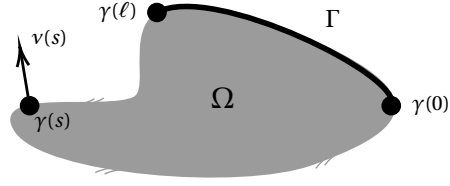

Let $\gamma:[0,L]\to\RR^2$ be an arclength parametrization of $\partial\Omega$, then $\gamma$ extends to an $L$-periodic $C^4$-smooth function on $\RR$. The outer unit normal to $\partial\Omega$ at the point $\gamma(s)$ will be denoted by $\nu (s)$. By a suitable choice of the starting point $\gamma(0)$ and the orientation we may assume that  $\Gamma=\gamma\big((0,\ell)\big)$ and  $\det \big(\nu(\cdot),\gamma'(\cdot)\big)=1$, see Figure~\ref{fig1}. Then the curvature $k(s)$ of $\partial\Omega$ at the point $\gamma(s)$ is defined by $\gamma''(s)=-k(s)\nu(s)$, which is equivalent to $\nu'(s)=k(s)\gamma'(s)$ due to Frenet formulas.
Note that $k$ is $C^m$ if the boundary is $C^{m+2}$-smooth (so $k$ is at least $C^2$ in our case).

The main results of the present paper can be very roughly summarized as follows: For any fixed $n\in\NN$ and $\alpha\to+\infty$ one has
$E_n(Q_\alpha)=-\alpha^2+E_n(L_\alpha)+\text{small relative error}$,
where $L_\alpha$ is the ``effective operator'' in $L^2(0,\ell)$ given by $f\mapsto-f''-\alpha k$
\emph{with Dirichlet boundary conditions} at the endpoints (for a rigorous formulation with precise remainder estimates we refer to Theorem \ref{thmeff}). It should be noted that if $k$ attains its maximum in the interior
of $\Gamma$, the localization argument of \cite{HK} can be directly transferred, and we are mainly interested
in the case when the maximum curvature is attained at an endpoint of $\Gamma$. The main difficulty to overcome comes from the contradictory expectations that (i) the eigenfunctions should be localized near the endpoint and (ii) at the same time the value of the eigenfunction is very small at the endpoint itself. This is an essentially new feature when compared with the case $\Gamma=\partial\Omega$ which does not require any boundary conditions as $\partial\Omega$ is a closed curve. Using standard approaches of the semiclassical analysis to analyze the effective operator we then give more detailed results for several specific situations. For the formulations it will be convenient to denote
\[
k_*:=\sup_{s\in(0,\ell)} k(s), \text{ i.e. the maximum curvature on $\Gamma$,}
\]
then, in particular, the following holds true:
\begin{itemize}
	\item if $k$ is constant on $(0,\ell)$, then 
	$E_n(Q_\alpha)=-\alpha^2-k_*\alpha-\frac{k_*^2}{2}+\frac{\pi^2 n^2}{\ell^2}+o(1)$,
	see Theorem \ref{kconst},
	\item if $0$ is a strict maximum of $k$ on $[0,\ell]$ with  $k'(0)<0$, then
	\[
	E_n(Q_\alpha)=-\alpha^2-k_*\alpha +a_n\big(-k'(0)\big)^\frac{2}{3} \alpha^{\frac{2}{3}}+o(\alpha^{\frac{2}{3}}),
	\]
	where $(-a_n)$ is the $n$-th zero of the Airy function $\mathrm{Ai}$, see Theorem \ref{kmax}.
\end{itemize}
In fact, the main results also cover degenerate maxima attained at endpoints and maxima attained in interior points
and contain more precise remainder estimates: we refer to Theorems \ref{kmax0} and \ref{kmax} for detailed formulations. Nevertheless we do not expect that our remainder estimates are optimal.

Let us now describe the structure of the paper. Our first main goal is to describe the asymptotic behavior of $E_n(Q_\alpha)$ for $\alpha\to+\infty$ in terms of an effective operator $\Lambda'_\alpha$ on $(0,\ell)$ given by \eqref{ll1}.
In Section~\ref{sec2} we introduce the $r$-neighborhood $\Omega_r$ of $\partial\Omega$
and consider a ``truncated version'' $Q_{\alpha,r}$ of $Q_\alpha$ acting in $L^2(\Omega_r)$. This new operator
is then rewritten in tubular coordinates near the boundary, which gives rise to a unitarily equivalent
operator $P_{\alpha,r}$ on $\TT\times(0,r)$, with $\TT:=\RR/(L\ZZ)$. By adjusting the coefficients in $P_{\alpha,r}$ we construct two operators $P^\pm_{\alpha,r}$ in $\TT\times(0,r)$ whose eigenvalues
provide lower/upper bounds for those of of $P_{\alpha,r}$. In Proposition \ref{propup}
we introduce special test functions for $P^+_{\alpha,r}$, which gives an upper bound for its eigenvalues
and, in turn, an upper bound for $E_n(Q_\alpha)$ in terms of $\Lambda'_\alpha$. 
In Section \ref{sec3} we are moving towards the lower bound. First, the upper bound from Section \ref{sec2}
is used to show that the individual eigenfunctions of $Q_\alpha$ are localized near the boundary,
and this shows that the eigenvalues of $Q_\alpha$ are very close to those of $Q_{\alpha,\alpha^{-\sigma}}$ with $\sigma\in(0,1)$, see Corollary \ref{cor4}. We then obtain a lower bound for the eigenvalues of $Q_{\alpha,\alpha^{-\sigma}}$ in terms of an intermediate operator $\Lambda_{\alpha,\rho}$ on $\TT$ defined in \eqref{ll2}: the main part is Lemma \ref{propdown} collecting various estimates for the associated sesquilinear forms and based on the Born-Oppenheimer-type asymptotic separation of variables, and the final result on the eigenvalue comparison is given in Proposition \ref{prop36}. All preceding constructions are then summarized in Corollary \ref{cor}: at this moment we have an upper bound in terms of $\Lambda'_\alpha$ and a lower bound in terms of $\Lambda_{\alpha,r}$. In Section \ref{sec4} we show that
actually the eigenvalues of these two one-dimensional operators are close to each other. This part the argument is based
on an identification technique we learned from \cite{Ex03} (see Proposition \ref{Comp}), and the final result is given in Lemma \ref{lem11}. Similar constructions were used in \cite{kobp} for Robin laplacians in polygons.
We remark that the worst term in the final remainder arises in this step. As a summary of all preceding computations we obtain Theorem \ref{thmeff} describing the asymptotics of $E_n(Q_\alpha)$ solely in terms of $\Lambda'_\alpha$. The operator $\Lambda'_\alpha$ is a semiclassical 
Schr\"odinger operator, and by applying several standard approaches in Section \ref{sec5} we obtain 
more precise results on its eigenvalues in terms of the curvature, which then translates into Theorems \ref{kconst}, \ref{kmax0} and  \ref{kmax} describing the asymptotic behavior of the eigenvalues of $Q_\alpha$
for several typical geometric situations.

\section{Tubular coordinates and the upper bound}\label{sec2}

Denote $\TT:=\RR/(L\ZZ)$ and $\Pi_r:=\TT\times(0,r)$ for $r>0$.
The tubular neighborhood theorem from the differential geometry states that there exists $R>0$ with $\|k\|_\infty R<1$ such that the map $\Phi: \Pi_R\ni (s,t)\mapsto \gamma(s)-t\nu(s)\in\RR^2$
is injective. Moreover, for each $r\in(0,R)$ the map $\Phi$ defines a diffeomorphism between $\Pi_r$ and the domain
\[
\Omega_r:=\big\{
x\in\Omega:\ d_{\partial\Omega}(x)<r
\big\}, \quad d_{\partial\Omega}(x):=\min_{y\in \partial\Omega} |x-y|.
\]
In addition,  for any $(s,t)\in \Pi_R$ one has $d_{\partial\Omega}\big(\Phi(s,t)\big)=t$, and the set
$\partial\Omega_r\setminus\partial\Omega\equiv \big\{x\in\Omega:\, d_{\partial\Omega}(x)=r\big\}$
is a $C^2$-smooth closed curve.

For $r\in(0,R)$ we define a closed hermitian sesquilinear form $q_{\alpha,r}$ in $L^2(\Omega)$,
\begin{align*}
q_{\alpha,r}(u,u)&:=\int_{\Omega_r} |\nabla u|^2\, dx -\alpha \int_\Gamma |u|^2\,dS,\\
\cD(q_{\alpha,r})&:=\big\{ u \in H^1(\Omega_r):\ u=0 \text{ on } \partial\Omega_r\setminus\partial\Omega\big\}=:\Tilde H^1_0(\Omega_r)
\end{align*}
and let $Q_{\alpha,r}$ be the associated self-adjoint operator in $L^2(\Omega_r)$.
For each $u\in \cD(q_{\alpha,r})$ its extension $\Tilde u$ by zero on $\Omega$ belongs to $\cD(q_\alpha)$ and satisfies
$\|\Tilde u\|_{L^2(\Omega)}=\|u\|_{L^2(\Omega_r)}$ and $q_{\alpha}(\Tilde u,\Tilde u)=q_{\alpha,r}(u,u)$,
so due to the min-max principle there holds
\begin{equation}
	\label{qqar1}
	E_n(Q_{\alpha})\le E_n(Q_{\alpha,r}) \text{ for all $n\in\NN$, $\alpha>0$, $r\in(0,R)$}.
\end{equation}

For any $r\in(0,R)$ consider the unitary operator
\[
\Theta_r: L^2(\Omega)\to L^2(\Pi_r),
\quad
(\Theta_r u)(s,t):=\sqrt{1-tk(s)}\,u\big(\Phi(s,t)\big),
\]
and the closed hermitian sesquilinear form $p_{\alpha,r}$ in $L^2(\Pi_r)$ given by
\begin{align*}
	p_{\alpha,r}(v,v)&=\int_{\Pi_r} \Bigg(\dfrac{1}{\big(1-t k(s)\big)^2} \big|\partial_s v(s,t)\big|^2 +\big|\partial_t v(s,t)\big|^2 - V(s,t) \big|v(s,t)\big|^2\Bigg)\, ds\, dt\\
	&\qquad
	-\int_0^\ell \Big(\alpha+\frac{k(s)}{2}\Big)\big|v(s,0)\big|^2\,ds,\\
	V(s,t)&:=\dfrac{t k''(s)}{2\big(1-t k(s)\big)^3}+\dfrac{5t^2 k'(s)^2}{4\big(1-t k(s)\big)^4}+\dfrac{\kappa(s)^2}{4\big(1-t k(s)\big)^2},\\
	\cD(p_{\alpha,r})&=\big\{ v\in H^1(\Pi_r):\ v(\cdot,r)=0\big\}=:\Tilde H^1_0(\Pi_r).
\end{align*}

A standard computation shows that $\cD(p_{\alpha,r})=\Theta_r \cD(q_{\alpha,r})$ and that for any $u\in \cD(q_{\alpha,r})$ there holds
$q_{\alpha,r}(u,u)=p_{\alpha,r}(\Theta_r u,\Theta_r u)$; see e.g. \cite[Section~2]{EY}.
It follows that the self-adjoint operator $P_{\alpha,r}$ in $L^2(\Pi_r)$ generated by the form $p_{\alpha,r}$ is unitarily equivalent to $Q_{\alpha,r}$, in particular,
\begin{equation}
	\label{qqar2}
	E_n(Q_{\alpha,r})= E_n(P_{\alpha,r}) \text{ for all $n\in\NN$, $\alpha>0$, $r\in(0,R)$}.
\end{equation}

We note that for a suitably chosen $A>0$ one has the estimates
\begin{equation}
	\label{vst}
	\begin{aligned}	
		\big|V(s,t)-&\dfrac{k(s)^2}{4}\big|\le A t,\qquad \Big|\dfrac{1}{\big(1-t k(s)\big)^2}-1\Big|\le At \quad	\text{ for all $(s,t)\in\Pi_r$ and $r\in(0,R)$.}
	\end{aligned}\quad 
\end{equation}
In particular, $p^-_{\alpha,r}(v,v)\le p_{\alpha,r}(v,v)\le p^+_{\alpha,r}(v,v)$ for all $v\in \Tilde H^1_0(\Pi_r)$,
where $p^\pm_{\alpha,r}$ are the closed hermitian sesquilinear forms in $L^2(\Pi_r)$ defined on the domain $\Tilde H^1_0(\Pi_r)$ by
\begin{align*}
	p^\pm_{\alpha,r}&(v,v)=\int_{\Pi_r} \Bigg((1\pm A r) \big|\partial_s v(s,t)\big|^2 +\big|\partial_t v(s,t)\big|^2\\
	&\qquad +\Big(\pm Ar-\dfrac{\kappa(s)^2}{4}\Big) \big|v(s,t)\big|^2\Bigg)\, ds\, dt-\int_0^\ell \Big(\alpha+\frac{k(s)}{2}\Big)\big|v(s,0)\big|^2\,ds,
\end{align*}
If $P^\pm_{\alpha,r}$ are the self-adjoint operators in $L^2(\Pi_r)$ generated by the forms $p^\pm_{\alpha,r}$, then the min-max principle implies
\begin{equation}
	\label{qqar3}
	E_n(P^-_{\alpha,r})\le E_n(P_{\alpha,r})\le E_n(P^+_{\alpha,r})  \text{ for all $n\in\NN$, $\alpha>0$, $r\in(0,R)$}.
\end{equation}	
By combining \eqref{qqar3} with \eqref{qqar1} and \eqref{qqar2} we conclude, in particular, that
\begin{equation}
	\label{qqar4}
	E_n(Q_\alpha)\le E_n(P^+_{\alpha,r})  \text{ for all $n\in\NN$, $\alpha>0$, $r\in(0,R)$}.
\end{equation}	

To analyze the eigenvalues of $P^\pm_{\alpha,r}$ we employ the following result from \cite[Lemma~2.1]{PP}:
\begin{lemma}\label{lem1d}
	For $\alpha>0$ and $r>0$, denote by $T_{\alpha, r}$ the operator $f\mapsto -f''$
	acting in $L^2(0,r)$ on the domain
	\[
	\cD(T_{\alpha,r})=\big\{
	f\in H^2(0,r):\  f'(0)=-\alpha f(0),\  f(r)=0
	\},
	\]
	which is generated by the closed hermitian sesquilinear form
	\begin{align}
	t_{\alpha,r}(f,f)&=\int_0^r\big|f'(t)\big|^2\,dt -\alpha \big|f(0)\big|^2, \quad
	\cD(t_{\alpha,r})=\big\{f\in H^1(0,r):\, f(r)=0\big\}=:\Tilde H^1_0(0,r).\label{h10}
	\end{align}
	
	If $r\alpha$ tends to $+\infty$, then $T_{\alpha,r}$
	has a unique negative eigenvalue, and
	$	E_1(T_{\alpha,r})=-\alpha^2 + \cO(\alpha^2e^{-r\alpha})$  for $r\alpha\to+\infty$.
	Furthermore, if $\psi$ is an associated $L^2$-normalized eigenfunction, then $\big|\psi(0)\big|^2=2\alpha+ \cO(\alpha e^{-r\alpha})$ for $r\alpha\to+\infty$.
\end{lemma}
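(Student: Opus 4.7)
The plan is to exploit the explicit solvability of this constant-coefficient one-dimensional problem. I look for negative eigenvalues $E=-\mu^2$ with $\mu>0$: the general solution of $-f''=-\mu^2 f$ satisfying the Dirichlet condition $f(r)=0$ is, up to a scalar, $f(t)=C\sinh\bigl(\mu(r-t)\bigr)$. Inserting this into the Robin condition $f'(0)=-\alpha f(0)$ reduces the spectral problem to the transcendental equation $\mu=\alpha\tanh(\mu r)$, whose positive roots are in one-to-one correspondence with the negative eigenvalues of $T_{\alpha,r}$.

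Next I would analyze this scalar equation both qualitatively and asymptotically. The map $\mu\mapsto\alpha\tanh(\mu r)-\mu$ vanishes at $\mu=0$, has derivative $\alpha r-1$ there, is strictly concave on $(0,\infty)$, and tends to $-\infty$ at infinity. So as soon as $\alpha r>1$, which is automatic once $r\alpha$ is sufficiently large, the equation has exactly one positive root $\mu_*$, proving both existence and uniqueness of the negative eigenvalue. Plugging $\tanh x=1+\cO(e^{-2x})$ into the fixed-point form then yields $\mu_*=\alpha+\cO\bigl(\alpha e^{-2r\alpha}\bigr)$, and squaring gives the claimed expansion of $E_1(T_{\alpha,r})$.

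For $\bigl|\psi(0)\bigr|^2$ I compute the ratio $|f(0)|^2/\|f\|_{L^2(0,r)}^2$ directly from the closed-form expression of $f$. After using the identity $\sinh(2\mu_* r)=2\sinh(\mu_* r)\cosh(\mu_* r)$ and the eigenvalue relation $\mu_*=\alpha\tanh(\mu_* r)$ to cancel the dominant hyperbolic factor, the ratio takes the form $2\alpha$ divided by $1$ plus an explicit correction $\bigl(1-\mu_* r\coth(\mu_* r)\bigr)/\sinh^2(\mu_* r)$, which is exponentially small as $r\alpha\to+\infty$; renormalizing gives $\bigl|\psi(0)\bigr|^2=2\alpha+\cO\bigl(\alpha e^{-r\alpha}\bigr)$. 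There is no genuine mathematical obstacle here: the lemma reduces to a completely solvable Sturm--Liouville problem on an interval, and the only delicate point is the bookkeeping of the exponentially small remainders, so as to absorb quantities such as $\alpha^2 r\,e^{-2\alpha r}$ into $\cO\bigl(\alpha e^{-\alpha r}\bigr)$ using $\alpha r\,e^{-\alpha r}\to 0$.
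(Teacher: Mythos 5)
Your proof is correct. Note that the paper does not prove this lemma at all: it is imported verbatim from \cite[Lemma~2.1]{PP}, so there is no in-paper argument to compare against; your explicit Sturm--Liouville computation (reduction to the transcendental equation $\mu=\alpha\tanh(\mu r)$, concavity of $\mu\mapsto\alpha\tanh(\mu r)-\mu$ for existence and uniqueness of the root, and the closed-form evaluation of $|f(0)|^2/\|f\|^2_{L^2(0,r)}$) is the standard and essentially the same argument as in the cited reference, and your bookkeeping of the exponentially small remainders, including absorbing $\alpha^2 r\,e^{-2\alpha r}$ into $\cO(\alpha e^{-\alpha r})$ via $\alpha r\,e^{-\alpha r}\to 0$, is sound.
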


We will denote by $\Lambda'_\alpha$ the self-adjoint Schr\"odinger operator
\begin{equation}
	\label{ll1}
\Lambda'_\alpha:\ f\mapsto -f''+\Big(\alpha (k_*-k)+\frac{k_*^2-2kk_*-k^2}{4}\Big)f
\end{equation}
with Dirichlet boundary conditions in $L^2(0,\ell)$.

\begin{proposition}\label{propup}
Let $\sigma\in(0,1)$, then
	there are $A,B,\alpha_0>0$ such
		that for any $n\in \NN$ and any $\alpha>\alpha_0$ there holds
		\begin{equation}
			\label{enq}
		E_n(Q_\alpha)\le -\alpha^2 -\alpha k_* +(1+A\alpha^{-\sigma})E_n(\Lambda'_\alpha) + B\alpha^{-\sigma}.
		\end{equation}
\end{proposition}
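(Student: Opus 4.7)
The strategy is to bound $E_n(P^+_{\alpha,r})$ from above by the min-max principle applied to a carefully chosen tensor-product test subspace, and then invoke \eqref{qqar4}. Fix $r:=\alpha^{-\sigma}$, so that $r\alpha=\alpha^{1-\sigma}\to+\infty$. For each $s\in[0,\ell]$ let $\psi_s\in\Tilde H^1_0(0,r)$ denote the positive, $L^2(0,r)$-normalized ground state of $T_{\alpha+k(s)/2,\,r}$; this is Lemma \ref{lem1d} applied with the Robin parameter $\alpha$ replaced by $\alpha+k(s)/2$. The central point is that the same quantity $\alpha+k(s)/2$ appears in the boundary integral of $p^+_{\alpha,r}$, so the $s$-dependent choice of $\psi_s$ produces an exact cancellation of the Robin boundary term with the transverse kinetic energy. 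Using an $s$-independent $\psi$ would leave an $O(1)$ residue that cannot be absorbed into the $B\alpha^{-\sigma}$ remainder.

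Given $f\in H^1_0(0,\ell)$, extended by zero to $\TT$, set $v(s,t):=f(s)\psi_s(t)$, which lies in $\Tilde H^1_0(\Pi_r)$ since $\psi_s(r)=0$. Because $\|\psi_s\|_{L^2(0,r)}\equiv 1$, one has $\int_0^r\psi_s\partial_s\psi_s\,dt=0$, so the cross term in $\int|\partial_s v|^2$ vanishes. The transverse eigenvalue relation $\int_0^r|\partial_t\psi_s|^2\,dt=(\alpha+k/2)|\psi_s(0)|^2+E_1(T_{\alpha+k/2,r})$ combines with the boundary integral $-(\alpha+k/2)|\psi_s(0)|^2|f|^2$ so that the $|\psi_s(0)|^2$ contributions drop out. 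Using $E_1(T_{\alpha+k/2,r})=-(\alpha+k/2)^2+\cO(\alpha^2 e^{-r\alpha})$ one obtains
\[
p^+_{\alpha,r}(v,v)=(1+Ar)\int_0^\ell|f'|^2\,ds+\int_0^\ell\Bigl(-\alpha^2-\alpha k-\tfrac{k^2}{2}+Ar\Bigr)|f|^2\,ds+R_\alpha[f],
\]
where $R_\alpha[f]=(1+Ar)\int_0^\ell|f|^2\|\partial_s\psi_s\|_{L^2(0,r)}^2\,ds+\cO(\alpha^2 e^{-\alpha^{1-\sigma}})\|f\|^2$. Differentiating the explicit normalized form of $\psi_s$ through $\kappa(s):=\alpha+k(s)/2$ (whose derivative $\kappa'=k'/2$ is uniformly bounded) and using $\int_0^\infty t^2 e^{-2\kappa t}\,dt=\cO(\kappa^{-3})$, a direct computation gives $\|\partial_s\psi_s\|_{L^2(0,r)}^2=\cO(\alpha^{-2})$ uniformly in $s$, so $R_\alpha[f]=\cO(\alpha^{-2})\|f\|^2$.

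The bracket is then converted to $\Lambda'_\alpha$ by the algebraic identity
\[
\Lambda'_\alpha=H_{\mathrm{eff}}+\alpha k_*+\tfrac14(k_*-k)^2,\qquad H_{\mathrm{eff}}:=-\partial_s^2-\alpha k-\tfrac{k^2}{2}\ \text{on}\ (0,\ell)\ \text{with Dirichlet BC},
\]
which is immediate from \eqref{ll1}. Since $(k_*-k)^2\ge 0$, this gives $\langle f,H_{\mathrm{eff}}f\rangle\le\langle f,\Lambda'_\alpha f\rangle-\alpha k_*\|f\|^2$. Moreover, the potential of $\Lambda'_\alpha$ is bounded below by some $-C$ independent of $\alpha$ (because $\alpha(k_*-k)\ge 0$ and the constant part is $O(1)$), so $\int|f'|^2\le\langle f,\Lambda'_\alpha f\rangle+C\|f\|^2$. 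Substituting into the expression for $p^+_{\alpha,r}(v,v)/\|v\|^2$ and collecting the $Ar$-terms yields, for every $f$ in the span of the first $n$ eigenfunctions of $\Lambda'_\alpha$,
\[
\frac{p^+_{\alpha,r}(v,v)}{\|v\|^2}\le-\alpha^2-\alpha k_*+(1+A\alpha^{-\sigma})E_n(\Lambda'_\alpha)+B\alpha^{-\sigma}
\]
for suitable $A,B>0$. Since $\psi_s\not\equiv 0$, the map $f\mapsto f\psi_s$ is injective, so the image is $n$-dimensional in $\Tilde H^1_0(\Pi_r)$; the min-max principle and \eqref{qqar4} then give \eqref{enq}.

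The main obstacle is the $s$-regularity estimate $\|\partial_s\psi_s\|_{L^2(0,r)}^2=\cO(\alpha^{-2})$: although $\psi_s$ contains the large prefactor $\sqrt{2(\alpha+k/2)}$ and the rapidly decaying exponential $e^{-(\alpha+k/2)t}$, its $s$-dependence enters only through the smooth scalar $\kappa(s)$, and the cancellations between the normalization and the exponential factor produce the two needed inverse powers of $\alpha$. Everything else is a careful bookkeeping of small terms controlled by the choice $r=\alpha^{-\sigma}$ with $\sigma\in(0,1)$.
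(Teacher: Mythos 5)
Your proof is correct and follows the same overall strategy as the paper (tensor-product test functions in the tube, min-max for $P^+_{\alpha,r}$ with $r=\alpha^{-\sigma}$, then \eqref{qqar4}), but the choice of transverse profile is genuinely different. The paper takes the \emph{fixed} ground state $\psi$ of $T_{\alpha+\frac{k_*}{2},r}$ for all $s$; the boundary term then does not cancel exactly, and the residue $|\psi(0)|^2\frac{k_*-k(s)}{2}=(2\alpha+k_*+\cO(\alpha e^{-r\alpha}))\frac{k_*-k(s)}{2}$ is precisely what generates the potential $\alpha(k_*-k)+\frac{k_*^2-2kk_*-k^2}{4}$ of $\Lambda'_\alpha$. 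Your side remark that an $s$-independent $\psi$ ``would leave an $\cO(1)$ residue that cannot be absorbed'' is therefore wrong: the residue is $\cO(\alpha)$ and is a \emph{main} term, not an error. Your $s$-dependent choice $\psi_s$ instead achieves exact cancellation pointwise in $s$, produces $-(\alpha+k(s)/2)^2$ directly, and recovers $\Lambda'_\alpha$ through the algebraic identity $\Lambda'_\alpha=H_{\mathrm{eff}}+\alpha k_*+\frac14(k_*-k)^2$ (which I checked and is exact, and the inequality $\frac14(k_*-k)^2\ge0$ goes in the right direction for an upper bound). The price you pay is the extra term $\|\partial_s\psi_s\|^2_{L^2(0,r)}$ in $\int|\partial_s v|^2$, which the paper's construction avoids entirely; your estimate $\|\partial_s\psi_s\|^2=\cO(\alpha^{-2})$ is true (the $s$-dependence enters only through $\kappa(s)=\alpha+k(s)/2$, with $\kappa'=k'/2$ bounded since $k\in C^2$, and differentiating the normalized profile in $\kappa$ indeed loses two powers of $\alpha$), but it requires writing out the exact finite-interval eigenfunction and its normalization, which is more work than it saves. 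Both routes yield \eqref{enq} with the same quality of remainder; the paper's is shorter, yours makes the Born--Oppenheimer mechanism more transparent.
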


\begin{proof}
Let $\psi$ be an $L^2$-normalized eigenfunction for the first eigenvalue of $T_{\alpha+\frac{k_*}{2},r}$ (Lemma \ref{lem1d}).
For $g\in H^1_0(0,\ell)\subset H^1(\TT)$ consider the function
$v:=g\otimes \psi:\ \Pi_r\ni (s,t)\mapsto g(s)\psi(t)$,
then $v\in \Tilde H^1_0(\Pi_r)$ with $\|v\|_{L^2(\Pi_r)}=\|g\|_{L^2(0,\ell)}$ and
\begin{align*}
	p^+_{\alpha,r}(v,v)&=(1+Ar)\int_0^\ell \big|g'(s)\big|^2\,ds+\int_0^\ell \big|g(s)\big|^2\,ds\int_0^r \big|\psi'(t)\big|^2\,dt\\
	&+\int_0^\ell \Big(Ar-\dfrac{k(s)^2}{4}\Big)\big|g(s)\big|^2
	-\big|\psi(0)\big|^2 \int_0^\ell \Big(\alpha+\frac{k(s)}{2}\Big)\big|g(s)\big|^2\,ds.
\end{align*}
Due to the choice of $\psi$ there holds
\[
\int_0^r \big|\psi'(t)\big|^2\,dt-\Big(\alpha+\frac{k_*}{2}\Big) \big|\psi(0)\big|^2=E_1(T_{\alpha+\frac{k_*}{2},r}),
\]
therefore,
\begin{align*}
p^+_{\alpha,r}(v,v)&=\int_0^\ell \Bigg[(1+Ar)\big|g'(s)\big|^2+ \Big(E_1(T_{\alpha+\frac{k_*}{2},r})+Ar-\dfrac{k(s)^2}{4}+\big|\psi(0)\big|^2\dfrac{k_*-k(s)}{2}\Big)\big|g(s)\big|^2\Bigg]\,ds.
\end{align*}
Now set $r:=\alpha^{-\sigma}$ with $\sigma\in(0,1)$. Due to Lemma \ref{lem1d} there exist $\alpha_0>0$ and $b>0$ such that for all $\alpha>\alpha_0$ we have
\begin{align*}
	E_1(T_{\alpha+\frac{k_*}{2},\alpha^{-\sigma}})&\le-\Big(\alpha+\frac{k_*}{2}\Big)^2+b\alpha^{-\sigma},\quad
	\big|\psi(0)\big|^2\dfrac{k_*-k(s)}{2}\le \Big(\alpha+\frac{k_*}{2}\Big) \big(k_*-k(s)\big)-b\alpha^{-\sigma} \text{ for all }s\in(0,\ell),
\end{align*}
which leads to
\begin{align*}
	p^+_{\alpha,\alpha^{-\sigma}}(v,v)&\le (-\alpha-\alpha k_*)\|g\|^2_{L^2(0,\ell)}\\
	&+ (1+A\alpha^{-\sigma})\int_0^\ell \bigg[ \big|g'(s)\big|^2+\Big(\alpha \big(k_*-k(s)\big) +\frac{k_*^2-2kk_*-k^2}{4}\Big)\big|g(s)\big|^2\bigg]\,ds
	+B\alpha^{-\sigma}\|g\|^2_{L^2(0,\ell)}\\
	&\text{for all $g\in H^1_0(0,\ell)$ and $\alpha>\alpha_0$, with a suitable $B>0$.}
\end{align*}
The integral of the right-hand side is exactly the sesquilinear form for the operator $\Lambda'_\alpha$
computed on $(g,g)$. Therefore, the substition of the above functions $v$ into the min-max principle shows that 
for all $n\in\NN$ and $\alpha>\alpha_0$ one has
$E_n(P^+_{\alpha,\alpha^{-\sigma}})\le -\alpha^2-\alpha k_* +(1+A\alpha^{-\sigma})E_n(\Lambda'_\alpha)
+ B\alpha^{-\sigma}$.
Due to \eqref{qqar3} for all sufficiently large $\alpha$ we have
$E_n(Q_\alpha)\le E_n(P^+_{\alpha,\alpha^{-\sigma}})$ for all $n\in\NN$,
which gives the sought estimate.
\end{proof}

\begin{corollary}\label{coro}
	For any fixed $n\in\NN$ one has $E_n(Q_\alpha)=\cO(\alpha^2)$ for $\alpha\to+\infty$.
\end{corollary}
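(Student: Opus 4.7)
The plan is to combine the upper bound just obtained in Proposition~\ref{propup} with a classical $\cO(\alpha^2)$ lower bound for the form $q_\alpha$. Both ingredients are standard, and the only small thing to verify is that the eigenvalues of the effective operator $\Lambda'_\alpha$ do not grow faster than linearly in $\alpha$.

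For the upper bound, I would first observe that the potential
\[
V_\alpha(s):=\alpha\bigl(k_*-k(s)\bigr)+\frac{k_*^2-2k(s)k_*-k(s)^2}{4}
\]
appearing in $\Lambda'_\alpha$ is pointwise bounded on $[0,\ell]$ by a constant times $\alpha$, since $k\in C^2([0,\ell])$. Testing the quadratic form of $\Lambda'_\alpha$ against the first $n$ Dirichlet eigenfunctions of $-\partial_s^2$ on $(0,\ell)$ in the min-max characterisation yields $E_n(\Lambda'_\alpha)\le \pi^2 n^2/\ell^2+C\alpha$ for some $C$ independent of $n$ and $\alpha$. Substituting this into \eqref{enq} with any fixed $\sigma\in(0,1)$ gives $E_n(Q_\alpha)\le -\alpha^2+\cO(\alpha)$, which is in particular $\cO(\alpha^2)$ from above.

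For the matching lower bound, I would invoke the standard trace inequality on the smooth bounded domain $\Omega$: there exists $C_\Omega>0$ such that for every $\epsilon>0$ and every $u\in H^1(\Omega)$,
\[
\int_{\partial\Omega}|u|^2\,dS\le \epsilon\int_\Omega|\nabla u|^2\,dx+\frac{C_\Omega}{\epsilon}\int_\Omega|u|^2\,dx.
\]
Choosing $\epsilon=1/(2\alpha)$ and using $\int_\Gamma|u|^2\,dS\le\int_{\partial\Omega}|u|^2\,dS$ yields
\[
q_\alpha(u,u)\ge\tfrac12\|\nabla u\|_{L^2(\Omega)}^2-2C_\Omega\alpha^2\|u\|_{L^2(\Omega)}^2\ge -2C_\Omega\alpha^2\|u\|_{L^2(\Omega)}^2,
\]
so by the min-max principle $E_n(Q_\alpha)\ge E_1(Q_\alpha)\ge -2C_\Omega\alpha^2$ for every $n$.

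I do not anticipate any real obstacle: the whole argument is an immediate consequence of Proposition~\ref{propup} together with the $H^1(\Omega)\to L^2(\partial\Omega)$ trace inequality, and combining the two one-sided bounds yields the two-sided estimate $|E_n(Q_\alpha)|\le C\alpha^2$ for large $\alpha$.
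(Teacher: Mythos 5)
Your proposal is correct and follows essentially the same route as the paper: the upper bound comes from Proposition~\ref{propup}, and the lower bound from the $H^1(\Omega)\to L^2(\partial\Omega)$ trace inequality with $\epsilon\sim\alpha^{-1}$, exactly as in the paper's argument (which cites Grisvard and takes $\varepsilon=1/\alpha$). Your explicit verification that $E_n(\Lambda'_\alpha)\le \pi^2n^2/\ell^2+C\alpha$ is a small detail the paper leaves implicit, and it is carried out correctly.
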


\begin{proof}
The upper bound is proved in Proposition \ref{propup}.
By \cite[Theorem 1.5.1.10]{Gri} one can find some $c>0$ such that for all $\varepsilon\in(0,1)$ and $u\in H^1(\Omega)$ there holds
\[
\int_\Gamma |u|^2\,ds\le \int_{\partial\Omega} |u|^2\,ds\le \varepsilon\int_\Omega |\nabla u|^2\, dx+\frac{c}{\varepsilon} \int_\Omega |u|^2\, dx.
\]
For $\varepsilon:=\frac{1}{\alpha}$ one arrives at $q_\alpha(u,u)\ge -c\alpha^2\|u\|^2_{L^2(\Omega)}$, which gives the lower bound $E_1(Q_\alpha)\ge-c\alpha^2$.
\end{proof}

\section{Lower bound: Effective operator on $\TT$}\label{sec3}

For each fixed $n$, the equation \eqref{enq} gives $E_n(Q_\alpha)\le -\alpha^2+\cO(\alpha)$ for $\alpha\to+\infty$. A minor adaptation of \cite[Theorem~5.1]{HK} (which formally considered the case $\Gamma=\partial\Omega$) gives the following Agmon-type estimate:
\begin{lemma}[Boundary localization]\label{prop-agmon}
Let $n\in\NN$ be fixed and $u_\alpha$ be an $L^2$-normalized eigenfunction of $Q_\alpha$ for the eigenvalue $E_n(Q_\alpha)$. Then for any $\theta\in(0,1)$ there are $C>0$ and $\alpha_0>0$ such that
\begin{equation}
	\label{agmon1}
\int_\Omega \Big(\alpha^{-2}\big|\nabla u_\alpha (x)\big|^2+\big|u_\alpha(x)\big|^2\big) e^{2\theta\alpha d_{\partial\Omega}(x)}\,dx\le C \text{ for all $\alpha>\alpha_0$}.
\end{equation}
\end{lemma}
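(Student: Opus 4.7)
I would implement the classical Agmon weighted-energy argument, adapted to the mixed Robin/Neumann boundary condition. The first step is to introduce the Lipschitz weight $\phi(x):=\theta\alpha\, d_{\partial\Omega}(x)$. Since $d_{\partial\Omega}$ is $1$-Lipschitz, vanishes on $\partial\Omega$, and is bounded on the bounded domain $\Omega$, one has $|\nabla\phi|\le\theta\alpha$ almost everywhere in $\Omega$ and $\phi|_{\partial\Omega}=0$. Consequently, the weighted function $v:=e^{\phi}u_\alpha$ and the test function $e^{2\phi}u_\alpha$ both lie in $H^1(\Omega)=\cD(q_\alpha)$.

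Next, I would insert $e^{2\phi}u_\alpha$ into the weak formulation $q_\alpha(u_\alpha,w)=E_n(Q_\alpha)\langle u_\alpha,w\rangle_{L^2(\Omega)}$ and take the real part. Using $2\,\mathrm{Re}(\bar u_\alpha\nabla u_\alpha)=\nabla(|u_\alpha|^2)$ and the algebraic identity $e^{2\phi}|\nabla u_\alpha|^2+e^{2\phi}\nabla(|u_\alpha|^2)\cdot\nabla\phi=|\nabla v|^2-|\nabla\phi|^2|v|^2$, together with $e^{2\phi}|_{\partial\Omega}\equiv 1$ and the boundary conditions of $Q_\alpha$, one obtains the weighted-energy identity
\[
\int_\Omega |\nabla v|^2\,dx \;+\; \int_\Omega \bigl(-E_n(Q_\alpha)-|\nabla\phi|^2\bigr)|v|^2\,dx \;=\; \alpha\int_\Gamma |u_\alpha|^2\,dS.
\]

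Now I would estimate both sides. Proposition \ref{propup}, applied with the crude bound $E_n(\Lambda'_\alpha)=\cO(\alpha)$ (immediate from \eqref{ll1} and the fixedness of $n$), yields $-E_n(Q_\alpha)\ge\alpha^2-C\alpha$, hence
\[
-E_n(Q_\alpha)-|\nabla\phi|^2\ge(1-\theta^2)\alpha^2-C\alpha\ge\tfrac12(1-\theta^2)\alpha^2
\]
for $\alpha$ sufficiently large. For the right-hand side, I would use the trace bound \cite[Theorem~1.5.1.10]{Gri} (as in the proof of Corollary \ref{coro}) with $\varepsilon=1/(2\alpha)$ to get $\alpha\int_\Gamma|u_\alpha|^2\le\tfrac12\int_\Omega|\nabla u_\alpha|^2+2c\alpha^2$; combined with the form identity $\int_\Omega|\nabla u_\alpha|^2=E_n(Q_\alpha)+\alpha\int_\Gamma|u_\alpha|^2$ and $E_n(Q_\alpha)<0$, this gives $\int_\Omega|\nabla u_\alpha|^2\le C\alpha^2$ and therefore $\alpha\int_\Gamma|u_\alpha|^2\le C\alpha^2$.

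Plugging these into the identity, both non-negative terms on the left-hand side are $\le C\alpha^2$. The second yields $\int_\Omega|v|^2\le C'$, which is the $|u_\alpha|^2$-part of \eqref{agmon1}. From $e^{\phi}\nabla u_\alpha=\nabla v-v\nabla\phi$ I get
\[
\int_\Omega e^{2\phi}|\nabla u_\alpha|^2\,dx\le 2\int_\Omega|\nabla v|^2\,dx+2\theta^2\alpha^2\int_\Omega|v|^2\,dx\le C''\alpha^2,
\]
and dividing by $\alpha^2$ completes \eqref{agmon1}. The main delicate point will be justifying the weighted-energy identity: since $d_{\partial\Omega}$ is only Lipschitz and not $C^2$, one should not integrate by parts against $\Delta\phi$ but work purely at the level of the sesquilinear form $q_\alpha$, exploiting that $e^{2\phi}u_\alpha$ is an admissible test function and that $e^{2\phi}=1$ on $\partial\Omega$ so that the boundary contribution collapses to the clean term $\alpha\int_\Gamma|u_\alpha|^2\,dS$.
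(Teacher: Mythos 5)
Your proposal is correct. The paper does not actually write out a proof of this lemma: it states that it follows by "a minor adaptation" of Theorem 5.1 in \cite{HK}, which is itself an Agmon-type weighted-energy estimate, so your argument is in the same spirit as the source the paper delegates to. Your identity $q_\alpha(v,v)-E_n(Q_\alpha)\|v\|^2=\int_\Omega|\nabla\phi|^2|v|^2\,dx$ with $v=e^{\phi}u_\alpha$, $\phi=\theta\alpha d_{\partial\Omega}$ is the standard Agmon identity (your pointwise algebraic computation and the admissibility of $e^{2\phi}u_\alpha$ as a test function are fine, since $\phi$ is bounded and Lipschitz and vanishes on $\partial\Omega$), and the two inputs you use are both legitimately available at this point of the paper: the bound $E_n(Q_\alpha)\le-\alpha^2+\cO(\alpha)$ comes from Proposition \ref{propup} together with the elementary bound $E_n(\Lambda'_\alpha)=\cO(\alpha)$ (and is precisely what the paper records just before the lemma, so there is no circularity), and the control $\alpha\int_\Gamma|u_\alpha|^2\,dS\le C\alpha^2$ follows from the Grisvard trace inequality combined with the eigenvalue equation, exactly as in Corollary \ref{coro}. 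A small point of difference worth noting: the adaptation of \cite{HK} proceeds via a cut-off/partition between a boundary layer and the interior, whereas you control the boundary term globally by the trace inequality; this is slightly more streamlined and works here because the weight is linear in $d_{\partial\Omega}$ with slope $\theta\alpha$, $\theta<1$, so the coefficient $-E_n(Q_\alpha)-|\nabla\phi|^2\ge\tfrac12(1-\theta^2)\alpha^2$ is uniformly positive on all of $\Omega$ for large $\alpha$. The final bookkeeping (each nonnegative left-hand term bounded by $C\alpha^2$, then $\int e^{2\phi}|\nabla u_\alpha|^2\le 2\int|\nabla v|^2+2\theta^2\alpha^2\int|v|^2$) correctly yields \eqref{agmon1}.
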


\begin{corollary}\label{cor4}
Let $\sigma\in(0,1)$, then for any fixed $n\in \NN$ and $M>0$ there holds
$
E_n(Q_\alpha)=E_n(Q_{\alpha,\alpha^{-\sigma}})+\cO(\alpha^{-M})$  for $\alpha\to+\infty$.
\end{corollary}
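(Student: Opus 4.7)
The upper bound $E_n(Q_\alpha)\le E_n(Q_{\alpha,\alpha^{-\sigma}})$ is already supplied by \eqref{qqar1}, so the task is to establish the matching lower bound $E_n(Q_{\alpha,\alpha^{-\sigma}})\le E_n(Q_\alpha)+\cO(\alpha^{-M})$. The plan is the standard localization argument: apply the min-max principle to $Q_{\alpha,r}$ with $r:=\alpha^{-\sigma}$, using trial functions obtained by cutting off the first $n$ eigenfunctions of $Q_\alpha$ to the tubular strip $\Omega_r$.

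Concretely, I fix a smooth $\chi:[0,\infty)\to[0,1]$ with $\chi\equiv 1$ on $[0,1/2]$ and $\chi\equiv 0$ on $[3/4,\infty)$, and set $\chi_\alpha(x):=\chi\big(d_{\partial\Omega}(x)/r\big)$; this is smooth on $\Omega$ because $d_{\partial\Omega}$ is smooth on $\Omega_R$ and $\chi_\alpha$ vanishes outside $\Omega_{3r/4}\subset\Omega_R$ for large $\alpha$. Pick an $L^2$-orthonormal family $u_1,\dots,u_n$ of eigenfunctions of $Q_\alpha$ for $E_1(Q_\alpha),\dots,E_n(Q_\alpha)$. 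Each $\chi_\alpha u_j$ vanishes on $\partial\Omega_r\setminus\partial\Omega$ and hence belongs to $\cD(q_{\alpha,r})$; moreover $\chi_\alpha\equiv 1$ on $\Gamma$, so the boundary contribution to $q_{\alpha,r}(\chi_\alpha u_j,\chi_\alpha u_k)$ coincides with that of $q_\alpha(u_j,u_k)$, while $|\nabla\chi_\alpha|\le C\alpha^\sigma$ is supported in the annulus $\{r/2\le d_{\partial\Omega}\le 3r/4\}$. I would then take $V_n:=\mathrm{span}(\chi_\alpha u_1,\dots,\chi_\alpha u_n)$ as trial subspace in the min-max characterization of $E_n(Q_{\alpha,r})$.

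The crucial point is to show that the Gram matrix $G_{jk}:=\langle\chi_\alpha u_j,\chi_\alpha u_k\rangle_{L^2(\Omega_r)}$ and the form matrix $H_{jk}:=q_{\alpha,r}(\chi_\alpha u_j,\chi_\alpha u_k)$ obey $G_{jk}=\delta_{jk}+\cO(\alpha^{-M})$ and $H_{jk}=E_j(Q_\alpha)\delta_{jk}+\cO(\alpha^{-M})$ for every $M$. After expanding $\nabla(\chi_\alpha u_j)=\chi_\alpha\nabla u_j+u_j\nabla\chi_\alpha$ and using the eigenvalue identity $q_\alpha(u_j,u_k)=E_j(Q_\alpha)\delta_{jk}$, the residual terms reduce to integrals of $|u_j u_k|$, $|u_j||\nabla u_k|$ and $|\nabla u_j||\nabla u_k|$ over $\{d_{\partial\Omega}\ge r/2\}$, on which $e^{2\theta\alpha d_{\partial\Omega}}\ge e^{\theta\alpha^{1-\sigma}}$; the Agmon bound \eqref{agmon1} therefore controls each such integral by $C\alpha^{N}e^{-\theta\alpha^{1-\sigma}}$, which since $\sigma<1$ decays faster than any power of $\alpha$ and absorbs both the polynomial factor $|\nabla\chi_\alpha|\le C\alpha^\sigma$ and the global bound $\int_\Omega|\nabla u_j|^2\,dx\le C\alpha^2$ (also a consequence of \eqref{agmon1}). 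Feeding these estimates into the min-max principle and invoking a standard perturbation bound for the generalized eigenvalues of the pencil $(H,G)$ gives $E_n(Q_{\alpha,r})\le E_n(Q_\alpha)+\cO(\alpha^{-M})$ for every $M$; the only subtlety is that $E_n(Q_\alpha)=\cO(\alpha^2)$ by Corollary \ref{coro}, so multiplication by the denominator error produces a contribution of order $\alpha^{2-M}$, harmless after replacing $M$ by $M+2$. I do not expect any significant obstacle: this is the textbook combination of min-max with Agmon decay, fully analogous to the localization step used in \cite{HK} and in the other references cited in the introduction.
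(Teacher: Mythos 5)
Your proposal is correct and follows essentially the same route as the paper: the easy direction via \eqref{qqar1}, then a min-max comparison using the first $n$ eigenfunctions of $Q_\alpha$ multiplied by a cutoff $\chi(\alpha^\sigma d_{\partial\Omega})$, with the Agmon estimate \eqref{agmon1} controlling all error terms up to $\cO(\alpha^{N}e^{-\theta\alpha^{1-\sigma}})$ and the $\cO(\alpha^2)$ bound on $E_n(Q_\alpha)$ absorbed by the arbitrariness of $M$. Your Gram-matrix/pencil phrasing is just a reformulation of the paper's direct Rayleigh-quotient bound on the image subspace $J(U_\alpha)$.
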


\begin{proof}
The proof is very standard if one takes into account Lemma \ref{agmon1}, but we include it for the sake of completeness.
In view of the inequality \eqref{qqar1} it is sufficient to find a suitable upper bound for $E_n(Q_{\alpha,\alpha^{-\sigma}})$
in terms of $E_n(Q_\alpha)$.

Let $u_{1,\alpha},\dots,u_{n,\alpha}$ be eigenfunctions of $Q_\alpha$ for $E_1(Q_\alpha),\dots, E_n(Q_\alpha)$
forming an orthonormal system in $L^2(\Omega)$. Denote
$U_\alpha:=\mathop{\mathrm{span}}(u_{1,\alpha},\dots,u_{n,\alpha})$.
As Lemma \ref{prop-agmon} holds for each $u_{j,\alpha}$, we conclude that for some $\theta\in(0,1)$ and $C>0$ there holds
\begin{equation}
	\label{agmon2}
	\int_\Omega \Big(\alpha^{-2}\big|\nabla u (x)\big|^2+\big|u(x)\big|^2\big) e^{2\theta\alpha d_{\partial\Omega}(x)}\,dx\le C\|u\|^2_{L^2(\Omega)} 
\end{equation}
for all $\alpha>\alpha_0$ and all $u\in U_\alpha$.

Let $\chi:\RR\to\RR$ be a $C^\infty$ function with $0\le \chi\le 1$ and such that $\chi(t)=1$ for $|t|\le\frac{1}{2}$ and $\chi(t)=0$ for $|t|\ge 1$. Consider the linear map
\[
J:\, U_\alpha\to \cD(q_{\alpha,\alpha^{-\sigma}}),
\quad
(J u)(x):=\chi\big(\alpha^\sigma d_{\partial \Omega}(x)\big)u(x).
\]
Due to the choice of $\chi$ we have
$J u=u$  in $\Omega_{\frac{\alpha^{-\sigma}}{2}}$
for all $u\in U_\alpha$. Furthermore, for any $u\in U_\alpha$ and $v:=Ju$ and all sufficiently large $\alpha$ there holds
\begin{align*}
	\|u\|^2_{L^2(\Omega)}&-\|v\|^2_{L^2(\Omega_{\alpha^{-\sigma}})}=\int_{\Omega\setminus \Omega_{\frac{\alpha^{-\sigma}}{2}}} \Big( 1-\chi\big(\alpha^\sigma d_{\partial \Omega}(x)\big)\Big)^2 \big|u(x)\big|^2\,dx\\
	&\le \int_{\Omega\setminus \Omega_{\frac{\alpha^{-\sigma}}{2}}}  \big|u(x)\big|^2\,dx
	\le \int_{\Omega\setminus \Omega_{\frac{\alpha^{-\sigma}}{2}}}  e^{2\theta \alpha \big(d_{\partial\Omega}(x)-\frac{\alpha^{-\sigma}}{2}\big)}\big|u(x)\big|^2\,dx\le e^{-\theta \alpha^{1-\sigma}}\int_{\Omega}  e^{2\theta \alpha d_{\partial\Omega}(x)}\big|u(x)\big|^2\,dx\\
&\stackrel{\eqref{agmon2}}{\le}Ce^{-\theta \alpha^{1-\sigma}}\,\|u\|^2_{L^2(\Omega)}\le \alpha^{-N}\|u\|^2_{L^2(\Omega)}
\end{align*}
with an arbitrarily chosen $N>0$. Therefore,
\begin{equation}
	\label{avn}
\big(1-\alpha^{-N}\big)\|u\|^2\le \|v\|^2\le \big(1+\alpha^{-N}\big)\|u\|^2,
\end{equation}
so $J$ is injective, $\dim J(U_\alpha)=n$ for all sufficiently large $\alpha$.
Due to $u=v$ on $\Omega_{\frac{\alpha^{-\sigma}}{2}}$ and on $\Gamma$
 we have
\[
q_{\alpha,\alpha^{-\sigma}}(v,v)-q_\alpha(u,u)=\int_{\Omega_{\alpha^{-\sigma}}\setminus \Omega_{\frac{\alpha^{-\sigma}}{2}}} |\nabla v|^2\, dx-\int_{\Omega\setminus \Omega_{\frac{\alpha^{-\sigma}}{2}}} |\nabla u|^2\,dx.
\]
We estimate as before
\begin{align*}
\int_{\Omega\setminus \Omega_{\frac{\alpha^{-\sigma}}{2}}}&  \big|\nabla u(x)\big|^2\,dx
\le \int_{\Omega\setminus \Omega_{\frac{\alpha^{-\sigma}}{2}}}  e^{2\theta \alpha \big(d_{\partial\Omega}(x)-\frac{\alpha^{-\sigma}}{2}\big)}\big|\nabla u(x)\big|^2\,dx\\
&\le e^{-\theta \alpha^{1-\sigma}}\int_{\Omega}  e^{2\theta \alpha d_{\partial\Omega}(x)}\big|\nabla u(x)\big|^2\,dx
\stackrel{\eqref{agmon2}}{\le}e^{-\theta \alpha^{1-\sigma}}\,C\alpha^2\|u\|^2_{L^2(\Omega)}.
\end{align*}
Furthermore, due to $|\nabla d_{\partial\Omega}|\le 1$ we have
\begin{align*}
	|\nabla v|^2&=\Big|\alpha^\sigma  \chi'\big(\alpha^\sigma d_{\partial \Omega}(x)\big) u(x) \nabla d_{\partial\Omega}(x)+
	\chi\big(\alpha^\sigma d_{\partial \Omega}(x)\big)\nabla u(x)\Big|^2\\
	&\le 2\Big|\alpha^\sigma  \chi'\big(\alpha^\sigma d_{\partial \Omega}(x)\big) u(x) \nabla d_{\partial\Omega}(x)\Big|^2+
	2\Big|\chi\big(\alpha^\sigma d_{\partial \Omega}(x)\big)\nabla u(x)\Big|^2\le 2\alpha^{2\sigma}\|\chi'\|_\infty^2 \big|u(x)\big|^2
	+2\big|\nabla u(x)\big|^2,
\end{align*}
and with previous estimates we have
\begin{align*}
	\int_{\Omega_{\alpha^{-\sigma}}\setminus \Omega_{\frac{\alpha^{-\sigma}}{2}}} |\nabla v|^2\,dx
	&\ \le 2\alpha^{2\sigma}\|\chi'\|_\infty\int_{\Omega_{\alpha^{-\sigma}}\setminus \Omega_{\frac{\alpha^{-\sigma}}{2}}}\big| u(x)\big|\,dx+2\int_{\Omega_{\alpha^{-\sigma}}\setminus \Omega_{\frac{\alpha^{-\sigma}}{2}}}\Big|\nabla u(x)\Big|^2dx\\
	&\ \le 2\alpha^{2\sigma}\|\chi'\|_\infty\int_{\Omega\setminus \Omega_{\frac{\alpha^{-\sigma}}{2}}}\big| u(x)\big|\,dx+2\int_{\Omega\setminus \Omega_{\frac{\alpha^{-\sigma}}{2}}}\Big|\nabla u(x)\Big|^2dx\\
	&\ \le 2\alpha^{2\sigma}\|\chi'\|_\infty^2Ce^{-\theta \alpha^{1-\sigma}}\,\|u\|^2_{L^2(\Omega)}
	+e^{-\theta \alpha^{1-\sigma}} C\alpha^2\,\|u\|^2_{L^2(\Omega)}\le \alpha^{-N}\|u\|^2_{L^2(\Omega)},
\end{align*}
This results in $\big|q_{\alpha,\alpha^{-\sigma}}(v,v)-q_\alpha(u,u)\big|\le 2 \alpha^{-N}\|u\|^2_{L^2(\Omega)}$.
In addition, for any $u\in U_\alpha$ we have $q_\alpha(u,u)\le E_n(Q_\alpha)\|u\|^2_{L^2(\Omega)}$,
and for the respective $v$ it follows
\[
q_{\alpha,\alpha^{-\sigma}}(v,v)\le q_\alpha(u,u)+2 \alpha^{-N}\|u\|^2_{L^2(\Omega)}\le \big(E_n(Q_\alpha)+2 \alpha^{-N}\big)\|u\|^2_{L^2(\Omega)}
\]

By to the min-max principle we have
\begin{equation}
\begin{aligned}
	E_n(Q_{\alpha,\alpha^{-\sigma}})&\le \max_{v\in J(U_\alpha),\,v\ne0}
	\dfrac{q_{\alpha,\alpha^{-\sigma}}(v,v)}{\|v\|^2_{L^2(\Omega_{\alpha^{-\sigma}})}}
	\le \max_{u\in U_\alpha,\,u\ne0}
	\dfrac{\big(E_n(Q_\alpha)+2 \alpha^{-N}\big)\|u\|^2_{L^2(\Omega)}}{\|v\|^2_{L^2(\Omega_{\alpha^{-\sigma}})}}
\end{aligned}
\label{temp11}
\end{equation}	
Due to \eqref{avn} we have for all large $\alpha$:
\[
1-2\alpha^{-N}\le\dfrac{1}{1+\alpha^{-N}}\le\dfrac{\|u\|^2_{L^2(\Omega)}}{\|v\|_{L^2(\Omega)}}\le \dfrac{1}{1-\alpha^{-N}}\le 1+2\alpha^{-N},
\]
which yields
\begin{align*}
\dfrac{\big(E_n(Q_\alpha)+2 \alpha^{-N}\big)\|u\|^2_{L^2(\Omega)}}{\|v\|^2_{L^2(\Omega_{\alpha^{-\sigma}})}}&\le E_n(Q_\alpha)+2 \alpha^{-N}+ \Big|E_n(Q_\alpha)+2 \alpha^{-N}\Big|2\alpha^{-N}\le E_n(Q_\alpha)+c \alpha^{2-N},
\end{align*}
where we used $E_n(Q_\alpha)=\cO(\alpha^2)$, see Corollary \ref{coro}. The substitution into \eqref{temp11} gives
$E_n(Q_{\alpha,\alpha^{-\sigma}})\le E_n(Q_\alpha)+c \alpha^{2-N}$.
As $N>0$ is arbitrary, the result follows.
\end{proof}

Recall that we have the lower bound $E_n(Q_{\alpha,\alpha^{-\sigma}})\ge E_n(P^-_{\alpha,\alpha^{-\sigma}})$,
see \eqref{qqar3}. We will now proceed with a lower bound for the right-hand side.
A suitable form of the one-dimensional Sobolev inequality will be used, see e.g.
Lemma~8 in~\cite{kuch}:
\begin{lemma}\label{lemsob}
	For any $0<b\le a$ and $f\in H^1(0,a)$ there holds
	\[
	\big|f(0)\big|^2\le b \int_0^a \big|f'(t)\big|^2dt+ \dfrac{2}{b}\int_0^a \big|f(t)\big|^2dt.
	\]
\end{lemma}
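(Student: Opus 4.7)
My plan is to obtain this trace inequality by a cutoff-and-integration-by-parts argument, localized to the subinterval $[0,b]$. Since the desired right-hand side has a factor $b$ multiplying $\int_0^a|f'|^2$ and a factor $2/b$ multiplying $\int_0^a|f|^2$, and since extending the integrals from $[0,b]$ to $[0,a]$ only weakens the bound, it suffices to prove the inequality with the integrals taken over $[0,b]$.

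First I would reduce to $f \in C^1([0,a])$ by the density of $C^1([0,a])$ in $H^1(0,a)$. Then I would introduce the affine cutoff $\chi(t):=1-t/b$ on $[0,b]$, which satisfies $\chi(0)=1$, $\chi(b)=0$, $0\le\chi\le 1$, and $\chi'\equiv -1/b$. Writing
\[
|f(0)|^2 = -\int_0^b \frac{d}{dt}\bigl[\chi(t)|f(t)|^2\bigr]\,dt
\]
by the fundamental theorem of calculus, I would expand the derivative to extract a term $(1/b)\int_0^b|f(t)|^2\,dt$ coming from $-\chi'|f|^2$, together with a cross-term of the form $-2\int_0^b \chi\,\mathrm{Re}(f'\overline{f})\,dt$. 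That cross-term I would estimate by combining $0\le\chi\le 1$ with Young's inequality $2|f'||f|\le b|f'|^2+b^{-1}|f|^2$, yielding a contribution bounded by $b\int_0^b|f'|^2\,dt+(1/b)\int_0^b|f|^2\,dt$. Adding the two pieces gives precisely $b\int_0^b|f'|^2\,dt+(2/b)\int_0^b|f|^2\,dt$, and enlarging the integration domain to $[0,a]$ closes the argument.

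The approach is entirely routine and I do not anticipate any genuine obstacle; the only modeling choice is the shape and scale of the cutoff. The linear function $\chi(t)=1-t/b$, scaled precisely by $b$, is chosen so that the two contributions to $|f(0)|^2$ coming from the derivative expansion combine with constants exactly matching the coefficients $b$ and $2/b$ in the statement. A different cutoff (e.g.\ a quadratic or a smooth bump) would produce an inequality of the same shape but with slightly different numerical constants.
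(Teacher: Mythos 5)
Your argument is correct and complete: the identity $|f(0)|^2=-\int_0^b\frac{d}{dt}\bigl[(1-t/b)|f(t)|^2\bigr]\,dt$, the bound $0\le 1-t/b\le1$, and Young's inequality $2|f'||f|\le b|f'|^2+b^{-1}|f|^2$ yield exactly the stated constants, and the reduction to $C^1$ functions is justified since the trace $f\mapsto f(0)$ and both integrals are continuous on $H^1(0,a)$. The paper does not prove this lemma itself but only cites Lemma~8 of \cite{kuch}; your cutoff-and-integration-by-parts proof is the standard self-contained argument for that result, so there is nothing to compare beyond noting that you have supplied the proof the paper delegates to the reference.
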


First let us make some preliminary steps. Let $\psi$ be an $L^2$-normalized eigenfunction of the one-dimensional operator $T_{\alpha+\frac{k_*}{2},r}$ from Lemma \ref{lem1d}
for its first eigenvalue, then
\[
t_{\alpha+\frac{k_*}{2},r}(\psi,\psi)\equiv 
\int_0^r \big|\psi'(t)\big|^2\,dt-\Big(\alpha+\frac{k_*}{2}\Big) \big|\psi(0)\big|^2=E_1(T_{\alpha+\frac{k_*}{2},r}),
\]
We represent each $v\in \Tilde H^1_0(\Pi_r)$ as
\begin{equation}
v=g\otimes \psi+w,\quad g\otimes \psi:\  (s,t)\to g(s)\psi(t), \label{vwg}
\end{equation}
with $g\in H^1(\TT)$ defined by
\[
g(s):=\int_0^r \overline{\psi(t)}v(s,t)\,dt,\quad s\in\TT.
\]
By construction we have
\[
\int_0^r \overline{\psi(t)}w(\cdot ,t)\, dt=0,
\quad
\int_0^r \overline{\psi(t)}\partial_sw(\cdot ,t)\, dt=\partial_s\int_0^r \overline{\psi(t)}w(\cdot ,t)\, dt=0,
\]
which implies
\begin{align}
	\int_0^r \big|v(s,t)\big|^2\,dt&=\int_0^r \big|g(s)\psi(t)\big|^2\,dt+\int_0^r \big|w(s,t)\big|^2\,dt =\big|g(s)\big|^2+\int_0^r \big|w(s,t)\big|^2\,dt,\ s\in \TT,\nonumber\\
	\text{hence, }\|v\|^2_{L^2(\Pi_r)}&=\|g\|^2_{L^2(\TT)}+\|w\|^2_{L^2(\Pi_r)}.\label{eqsum}
\end{align}

For $\rho\in(0,1)$ and $\alpha>0$ denote by $\Lambda_{\alpha,\rho}$ the Sch\"odinger operator 
in $L^2(\TT)$ given by
\begin{equation}
	\label{ll2}
\Lambda_{\alpha,\rho}: f\mapsto -f''+\bigg[\Big(\alpha(k_*-k)+\frac{k_*^2-2kk_*-k^2}{4}\Big)\one_{(0,\ell)} + \alpha^{2-\rho}\one_{\TT\setminus(0,\ell)}\bigg]f,
\end{equation}
which will play a central role below.

\begin{lemma}\label{propdown}
Let $\sigma,\rho\in(0,1)$ and $\tau\in \big(0,\frac{1}{3}\big)$, and denote
\[
\nu:=\min\{\rho,\tau\}\in(0,1),\quad \mu:=\max\{2-\rho,1+3\tau\}\in(1,2),
\]
then there are $c,c'>0$ and $\alpha_0>0$ such that for any $\alpha>\alpha_0$ and any $v\in \Tilde H^1_0(\Pi_r)$ there holds
\begin{align*}
		p^-_{\alpha,\alpha^{-\sigma}}(v,v)&\ge (-\alpha^2-\alpha k_*-c\alpha^{-\nu})\|g\|^2_{L^2(\TT)} +(1-c'\alpha^{-\nu})\,\lambda_{\alpha,\rho}(g,g)-b\alpha^\mu \big\| w\big\|^2_{L^2(\Pi_r)},
	\end{align*}
where $\lambda_{\alpha,\rho}$ is the sesquilinear form for $\Lambda_{\alpha,\rho}$.
\end{lemma}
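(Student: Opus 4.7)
Following the Born--Oppenheimer strategy from \cite{Ex03} signalled by the decomposition \eqref{vwg}, I would plug $v=g\otimes\psi+w$ into $p^-_{\alpha,\alpha^{-\sigma}}(v,v)$ and expand each term. The orthogonality $\int_0^r\overline{\psi}\,w\,dt=0$ (and its $s$-derivative) makes the $|\partial_s v|^2$ and $-(Ar+k^2/4)|v|^2$ contributions split cleanly as sums of pure-$g$ and pure-$w$ integrals. The transverse kinetic term $|\partial_t v|^2$ instead produces a cross that, after integration by parts using $w(\cdot,r)=0$, the Robin condition $\psi'(0)=-(\alpha+k_*/2)\psi(0)$, and the eigen-equation $-\psi''=E_1(T_{\alpha+k_*/2,r})\psi$ together with the $\psi$-orthogonality of $w$, collapses to $2(\alpha+k_*/2)\psi(0)\int_\TT\Re g\overline{w(s,0)}\,ds$. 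Combined with the Robin-boundary cross $-2\psi(0)\int_0^\ell(\alpha+k/2)\Re g\overline{w(s,0)}\,ds$, the total cross is a small piece $\psi(0)\int_0^\ell(k_*-k)\Re g\overline{w(s,0)}\,ds$ of order $\alpha^{1/2}$ plus a large piece $2(\alpha+k_*/2)\psi(0)\int_{\TT\setminus(0,\ell)}\Re g\overline{w(s,0)}\,ds$ of order $\alpha^{3/2}$.

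For the pure-$g$ part I would substitute $\int_0^r|\psi'|^2\,dt=(\alpha+k_*/2)^2+\cO(\alpha^2 e^{-\alpha^{1-\sigma}})$ and $|\psi(0)|^2=2(\alpha+k_*/2)+\cO(\alpha e^{-\alpha^{1-\sigma}})$ from Lemma \ref{lem1d}. The coefficient of $|g|^2$ then becomes $-\alpha^2-\alpha k+(k_*^2-2kk_*-k^2)/4-Ar$ on $(0,\ell)$ and $(\alpha+k_*/2)^2-Ar-k^2/4$ on $\TT\setminus(0,\ell)$, both up to exponentially small terms; rewriting $-\alpha^2-\alpha k=-\alpha^2-\alpha k_*+\alpha(k_*-k)$ one recognises the pure-$g$ part as $(-\alpha^2-\alpha k_*)\|g\|^2_{L^2(\TT)}+(1-c'\alpha^{-\nu})\lambda_{\alpha,\rho}(g,g)$ plus a residual $\cO(\alpha^{-\nu})\|g\|^2$ to be absorbed into the $-c\alpha^{-\nu}\|g\|^2$ slack, and leaves on $\TT\setminus(0,\ell)$ a positive excess of order $2\alpha^2$ in front of $|g|^2$ that will be crucial below. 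The pure-$w$ part uses $w(s,\cdot)\perp\psi$ together with Lemma \ref{lem1d} (unique negative eigenvalue of $T_{\alpha+k_*/2,r}$): one has $t_{\alpha+k_*/2,r}(w(s,\cdot),w(s,\cdot))\ge 0$, hence $\int_0^r|\partial_t w|^2\,dt\ge(\alpha+k_*/2)|w(s,0)|^2$ pointwise in $s$; after cancelling with the Robin term, this leaves $(\alpha+k_*/2)|w(s,0)|^2$ on $\TT\setminus(0,\ell)$ and $\tfrac{k_*-k}{2}|w(s,0)|^2\ge 0$ on $(0,\ell)$, modulo an $\cO(1)\|w\|^2$ contribution from $Ar+k^2/4$.

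The cross terms are then controlled by Cauchy--Schwarz with well-tuned weights. For the small cross on $(0,\ell)$ I would use an $s$-dependent weight calibrated to the $c'\alpha^{-\nu}\cdot\alpha(k_*-k)$ slack in the $\Lambda_{\alpha,\rho}$-potential, giving an $|g|^2$-piece that is absorbed into the existing $-c\alpha^{-\nu}\|g\|^2$ slack and a residual $\cO(\alpha^\nu)|w(s,0)|^2$. For the large cross on $\TT\setminus(0,\ell)$ I would use a balancing weight of size $\alpha+\alpha^{1-\rho}$, yielding roughly $2\alpha^2(1-\alpha^{-\rho}/2)|g|^2+(\alpha+\alpha^{1-\rho}/2)|w(s,0)|^2$: the $|g|^2$-piece fits into the $2\alpha^2$-excess identified above (leaving $(1-c'\alpha^{-\nu})\alpha^{2-\rho}$ for the $\Lambda_{\alpha,\rho}$-potential), and the $|w(s,0)|^2$-piece exceeds the pure-$w$ budget only by $\cO(\alpha^{1-\rho})$. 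The remaining boundary excesses are converted to a volume $\|w\|^2_{L^2(\Pi_r)}$ error via Lemma \ref{lemsob}, after reserving a small fraction of $\|\partial_t w\|^2$ for this purpose; a Sobolev parameter $b\sim\alpha^{-(1+\tau)}$ and a convex-combination parameter of size $\alpha^{-\tau}$ then yield a remainder $-b'\alpha^\mu\|w\|^2$ with $\mu=\max\{2-\rho,1+3\tau\}$.

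The main obstacle is the transverse cross term of size $\sim\alpha^{3/2}$ on $\TT\setminus(0,\ell)$: the Cauchy--Schwarz balance is tight and works only because the pure-$g$ part fortuitously carries an $\alpha^2$-excess on this set while the pure-$w$ part carries an $\alpha$-budget at the boundary. The leftover $\cO(\alpha^{1-\rho})$ of $|w(s,0)|^2$ excess (and the $\cO(\alpha^\nu)$ from $(0,\ell)$) must then be dumped into $\|w\|^2_{L^2(\Pi_r)}$ via Lemma \ref{lemsob}, and it is precisely the restriction $\tau\in(0,1/3)$ on the Sobolev-parameter exponent which makes the final trade-off compatible with the claimed $\mu\in(1,2)$.
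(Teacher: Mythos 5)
Your proposal is correct and follows essentially the same route as the paper: the same decomposition $v=g\otimes\psi+w$, the same identification of the cross term as a large boundary piece on $\TT\setminus(0,\ell)$ plus a small piece weighted by $k_*-k$ on $(0,\ell)$, the same use of $E_2(T_{\alpha+k_*/2,r})\ge 0$ for the pure-$w$ part, a Cauchy--Schwarz with an $\alpha^{-\rho}$-calibrated weight leaving the $\alpha^{2-\rho}$ barrier and an $\cO(\alpha^{2-\rho})\|w\|^2$ penalty, and Lemma \ref{lemsob} combined with $|w(s,0)|^2\le\tfrac{2}{\alpha}\|\partial_t w(s,\cdot)\|^2$ for the remaining boundary residuals. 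The only differences are organizational (direct expansion plus integration by parts instead of the paper's add-and-subtract of the full-$\TT$ Robin term, and slightly different weight/parameter choices), and your final exponent bookkeeping matches the claimed $\mu=\max\{2-\rho,1+3\tau\}$.
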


\begin{proof}
During the proof all inequalities are considered for $\alpha\to+\infty$, and we set
$r:=\alpha^{-\sigma}$. The spectral theorem for $T_{\alpha+\frac{k_*}{2},r}$
gives for all $s\in\TT$:
\begin{align}
t_{\alpha+\frac{k_*}{2},r}\big(w(s,\cdot),w(s,\cdot)\big)&\ge E_2(T_{\alpha,r}) \big\|w(s,\cdot)\big\|^2_{L^2(0,r)}, \label{i2w}\\
t_{\alpha+\frac{k_*}{2},r}\big(\psi,w(s,\cdot)\big)&=0, \nonumber
\end{align}
which implies
\begin{align*}
	I(v)&:=\int_{\Pi_r} \big|\partial_t v(s,t)\big|^2\,ds\,dt-\Big(\alpha+\frac{k_*}{2}\Big) \int_{\TT}\big|v(s,0)\big|^2\,ds=\int_{\TT} t_{\alpha+\frac{k_*}{2},r}\big( v(s,\cdot),v(s,\cdot)\big)\,ds\\
	&=\int_{\TT} \Big[t_{\alpha+\frac{k_*}{2},r}\big( g(s)\psi,g(s)\psi\big)
	+t_{\alpha+\frac{k_*}{2},r}\big( w(s,\cdot),w(s,\cdot)\big)\Big]\,ds
	=I(g\otimes \psi)+I(w),
\end{align*}
with
\begin{align}
	I(g\otimes\psi)&=\int_{\TT}E_1(T_{\alpha+\frac{k_*}{2},r}) \big|g(s)\big|^2\,ds= E_1(T_{\alpha+\frac{k_*}{2},r}) \|g\|^2_{L^2(\TT)},\nonumber\\
	I(w)&\ge \int_\TT E_2 (T_{\alpha+\frac{k_*}{2},r})\big\|w(s,\cdot)\big\|^2_{L^2(0,r)}\,ds.
\end{align}
The substitution into the expression for $p^-_{\alpha,r}$ gives
\begin{align*}
	p^-_{\alpha,r}(v,v)&=I(v)+J_1(v)+J_2(v),\\
	\text{with } J_1(v)&:=\int_{\Pi_r} \bigg[(1- A r)\big|\partial_s v(s,t)\big|^2-\Big(Ar+\dfrac{k(s)^2}{4}\Big) \big|v(s,t)\big|^2\bigg]\, ds\, dt,\\
	J_2(v)&:=\Big(\alpha+\frac{k_*}{2}\Big)\int_{\TT\setminus(0,\ell)}\big|v(s,0)\big|^2\,ds+\int_0^\ell\dfrac{k_*-k(s)}{2}\,\big|v(s,0)\big|^2\,ds.
\end{align*}
Using the above orthogonality relations we obtain
$J_1(v)=J_1(g\otimes\psi)+J_1(w)$, and one can simplify
\[
J_1(g\otimes\psi)=\int_{\TT} \bigg[(1- A r)\big|g'(s)\big|^2-\Big(Ar+\dfrac{k(s)^2}{4}\Big) \big|g(s)\big|^2\bigg]\, ds.
\]
On the other hand, $J_2(v)=J_2(g\otimes\psi)+J_2(w)+J^\prime(g\otimes\psi,w)$,
\begin{align*}
	J^\prime(g\otimes\psi,w)&:=2\Re\bigg[
	\Big(\alpha+\frac{k_*}{2}\Big)\int_{\TT\setminus(0,\ell)}(g\otimes\psi)(s,0)\overline{w(s,0)}\,ds+
	\int_0^\ell\dfrac{k_*-k(s)}{2}\,(g\otimes\psi)(s,0)\overline{w(s,0)}\,ds	
	\bigg]\\
	&=
	2\Re \bigg[\psi(0)\bigg(
	\Big(\alpha+\frac{k_*}{2}\Big)\int_{\TT\setminus(0,\ell)}g(s)\overline{w(s,0)}\,ds+\int_0^\ell \dfrac{k_*-k(s)}{2}\,g(s)\overline{w(s,0)}\,ds\bigg)\bigg],
\end{align*}
and one can again simplify
\begin{align*}
	J_2(g\otimes\psi)&=\Big(\alpha+\frac{k_*}{2}\Big)\big|\psi(0)\big|^2 \int_{\TT\setminus(0,\ell)} \big|g(s)\big|^2\,ds+\big|\psi(0)\big|^2 \int_0^\ell \dfrac{k_*-k(s)}{2}\,\big|g(s)\big|^2\,ds.
\end{align*}

Overall, we arrive at
\begin{equation}
	\label{pvv}
	p^-_{\alpha,r}(v,v)=p^-_{\alpha,r}(g\otimes\psi,g\otimes\psi)+p^-_{\alpha,r}(w,w)+J'(g\otimes\psi,w),
\end{equation}
with
\begin{align*}
	p^-_{\alpha,r}(g\otimes\psi,g\otimes\psi)&=
	I(g\otimes\psi)+J_1(g\otimes\psi)+J_2(g\otimes\psi)\\
	&=E_1(T_{\alpha+\frac{k_*}{2},r}) \|g\|^2_{L^2(\TT)} 
	+\int_{\TT} \bigg[(1- A r)\big|g'(s)\big|^2-\Big(Ar+\dfrac{k(s)^2}{4}\Big) \big|g(s)\big|^2\bigg]\, ds\\
	&\quad +\Big(\alpha+\frac{k_*}{2}\Big)\big|\psi(0)\big|^2 \int_{\TT\setminus(0,\ell)} \big|g(s)\big|^2\,ds+\big|\psi(0)\big|^2 \int_0^\ell \dfrac{k_*-k(s)}{2}\,\big|g(s)\big|^2\,ds,
\end{align*}
and we recall that $p^-_{\alpha,r}(w,w)=I(w)+J_1(w)+J_2(w)$ and that due to Lemma \ref{lem1d} we have
\begin{align}
E_1(T_{\alpha+\frac{k_*}{2},r})&\ge -\Big(\alpha+\frac{k_*}{2}\Big)^2-r, \label{e1a}\\
E_2(T_{\alpha+\frac{k_*}{2},r})&\ge 0, \label{e2a} \\
\big|\psi(0)\big|^2&\ge 2\Big( \alpha+\frac{k_*}{2}\Big)-r. \label{f0a}
\end{align}

We estimate  $\big|J'(g\otimes\psi,w)\big|\le J'_1+J'_2$ with
\begin{align*}
J'_1&:=  \Big(\alpha+\frac{k_*}{2}\Big)\int_{\TT\setminus(0,\ell)}2\big|\psi(0)\big|\big|g(s)\big|\cdot\big|w(s,0)\big|\,ds,\\
J'_2&:=\int_0^\ell \dfrac{k_*-k(s)}{2}\,2\big|\psi(0)\big|\big|g(s)\big|\cdot\big|w(s,0)\big|\,ds.
\end{align*}
For any $\varepsilon>0$ we have
\begin{equation}
	\label{psi11}
	2\big|\psi(0)\big|\big|g(s)\big|\cdot\big|w(s,0)\big|\le \varepsilon \big|\psi(0)\big|^2\big|g(s)\big|^2+\dfrac{1}{\varepsilon}\big|w(s,0)\big|^2.
\end{equation}
Set $\varepsilon:=1-\alpha^{-\rho}$ with $\rho\in(0,1)$, then $\tfrac{1}{\varepsilon}\le 1+2\alpha^{-\rho}$, and
\begin{equation}
	\label{jpm1}
	\begin{aligned}
	J'_1&\le (1-\alpha^{-\rho})\Big(\alpha+\frac{k_*}{2}\Big)\big|\psi(0)\big|^2\int_{\TT\setminus(0,\ell)} \big|g(s)\big|^2\,ds+(1+2\alpha^{-\rho})\Big(\alpha+\frac{k_*}{2}\Big) \int_{\TT\setminus(0,\ell)} \big|w(s,0)\big|^2\,ds.
	\end{aligned}
\end{equation}

To obtain an upper bound for $J'_2$ we will need a different choice of $\varepsilon$. Remark first that due to \eqref{i2w} and \eqref{e2a} we have
\[
\int_0^r \big|\partial_t w(s,t)\big|^2\,dt-\Big(\alpha+\frac{k_*}{2}\Big)\big|w(\cdot,0)\big|^2\ge 0.
\]
Using $\alpha+\frac{k_*}{2}\ge \frac{\alpha}{2}$ we deduce
\begin{equation}
	\label{sw0}
	\big|w(s,0)\big|^2\le \dfrac{2}{\alpha}\int_{0}^r \big|\partial_t w(s,t)\big|^2\,dt.
\end{equation}
For any $\eta\in (0,1)$ and $b\in(0,r)$ we have, by using \eqref{sw0} and Lemma \ref{lemsob}:
\begin{align*}
	\big|w(s,0)\big|^2&=(1-\eta)\big|w(s,0)\big|^2+\eta\big|w(s,0)\big|^2\\
	&\le \dfrac{2(1-\eta)}{\alpha}\big\|\partial_t w(s,\cdot)\big\|^2_{L^2(0,r)} +\eta\Big( b \big\|\partial_t w(s,\cdot)\big\|^2_{L^2(0,r)} +\dfrac{2}{b}\big\|w(s,\cdot)\big\|^2_{L^2(0,r)}\Big)\\
	&=\dfrac{2}{\alpha}\bigg( 1-\eta\Big[1-\dfrac{\alpha b}{2}\Big]\bigg)\big\|\partial_t w(s,\cdot)\big\|^2_{L^2(0,r)} +\dfrac{2\eta}{b}\big\|w(s,\cdot)\big\|^2_{L^2(0,r)}.
\end{align*}
The choice $b:=\frac{2(1-\theta)}{\alpha}$ with $\theta\in(0,1)$,
simplifies the preceding inequality to
\begin{equation*}
	\big|w(s,0)\big|^2\le \dfrac{2(1-\theta \eta)}{\alpha}\big\|\partial_t w(s,\cdot)\big\|^2_{L^2(0,r)}+\dfrac{\eta\alpha }{1-\theta}\big\|w(s,\cdot)\big\|^2_{L^2(0,r)},
\end{equation*}
and the substitution into \eqref{psi11} gives
\begin{align*}
2\big|\psi(0)\big|&\big|g(s)\big|\cdot\big|w(s,0)\big|\le \varepsilon \big|\psi(0)\big|^2\big|g(s)\big|^2+\dfrac{2(1-\theta \eta)}{\alpha \varepsilon}\big\|\partial_t w(s,\cdot)\big\|^2_{L^2(0,r)}+\dfrac{\eta\alpha }{(1-\theta)\varepsilon}\big\|w(s,\cdot)\big\|^2_{L^2(0,r)}.
\end{align*}

Choose the parameters as
\begin{align*}
\varepsilon&:=\alpha^{-\tau} \text{ with } \tau\in\big(0,\tfrac{1}{3}\big) \qquad \theta:=1-\varepsilon^2\in (0,1),\quad
\eta:= \dfrac{1-\varepsilon}{\theta}=\dfrac{1-\varepsilon}{1-\varepsilon^2}=\dfrac{1}{1+\varepsilon}\in(0,1),
\end{align*}
then
\[
\dfrac{1-\theta \eta}{\varepsilon}=1, \qquad
\dfrac{\eta\alpha }{(1-\theta)\varepsilon}=\dfrac{\alpha}{(1+\varepsilon)\varepsilon^2 \varepsilon}=\dfrac{\alpha^{1+4\tau}}{\alpha^{\tau}+1}\le \alpha^{1+3\tau}.
\]
So we arrive at
\begin{align*}
	2\big|\psi(0)\big|&\big|g(s)\big|\cdot\big|w(s,0)\big|\le \alpha^{-\tau}\big|\psi(0)\big|^2\big|g(s)\big|^2+\dfrac{2}{\alpha}\|\partial_t w(s,\cdot)\big\|^2_{L^2(0,r)}+\alpha^{1+3\tau}\big\|w(s,\cdot)\big\|^2_{L^2(0,r)},
\end{align*}
and the substitution into the expression for $J'_2$ gives
\begin{align*}
	J'_2&\le \alpha^{-\tau}\big|\psi(0)\big|^2\int_0^\ell \dfrac{k_*-k(s)}{2}\,\big|g(s)\big|^2\,ds +\dfrac{2K}{\alpha}\int_0^\ell \|\partial_t w(s,\cdot)\big\|^2_{L^2(0,r)}\,ds
	+K \alpha^{1+3\tau}\int_0^\ell \,\big\|w(s,\cdot)\big\|^2_{L^2(0,r)}\,ds,
\end{align*}
where we denoted
\[
K:=\sup_{s\in (0,\ell)}\dfrac{k_*-k(s)}{2}.
\]

Using the last bound for $J'_2$ and the bound \eqref{jpm1} for $J'_1$ we obtain the
lower bound $J'(g\otimes\psi,w)\ge -J'_1-J'_2$.
Inserting this estimate into the decomposition \eqref{pvv} and collecting separately the terms with $g$ and the terms with $w$ one arrives at
\[
p^-_{\alpha,r}(v,v)\ge Y(g)+Y'(w),
\]
where
\begin{align*}
	Y(g)&:=E_1(T_{\alpha+\frac{k_*}{2},r}) \|g\|^2_{L^2(\TT)} +\int_{\TT} \bigg[(1- A r)\big|g'(s)\big|^2-\Big(Ar+\dfrac{k(s)^2}{4}\Big) \big|g(s)\big|^2\bigg]\, ds\\
	&\quad +\alpha^{-\rho}\Big(\alpha+\frac{k_*}{2}\Big)\big|\psi(0)\big|^2 \int_{\TT\setminus(0,\ell)} \big|g(s)\big|^2\,ds+(1-\alpha^{-\tau})\big|\psi(0)\big|^2 \int_0^\ell \dfrac{k_*-k(s)}{2}\,\big|g(s)\big|^2\,ds,\\
	Y'(w)&:=\int_{\Pi_r} \big|\partial_t w(s,t)\big|^2\,ds\,dt-\Big(\alpha+\frac{k_*}{2}\Big) \int_{\TT}\big|w(s,0)\big|^2\,ds\\
	&\quad + \int_{\Pi_r} \bigg[(1- A r)\big|\partial_s w(s,t)\big|^2-\Big(Ar+\dfrac{k(s)^2}{4}\Big) \big|w(s,t)\big|^2\bigg]\, ds\, dt,\\
 &\quad +\Big(\alpha+\frac{k_*}{2}\Big)\int_{\TT\setminus(0,\ell)}\big|w(s,0)\big|^2\,ds+\int_0^\ell\dfrac{k_*-k(s)}{2}\,\big|w(s,0)\big|^2\,ds\\	
&\quad	-(1+2\alpha^{-\rho})\Big(\alpha+\frac{k_*}{2}\Big) \int_{\TT\setminus(0,\ell)} \big|w(s,0)\big|^2\,ds\\
	&\quad -\dfrac{2K}{\alpha}\int_0^\ell \|\partial_t w(s,\cdot)\big\|^2_{L^2(0,r)}\,ds
	-K\alpha^{1+3\tau}\int_0^\ell \,\big\|w(s,\cdot)\big\|^2_{L^2(0,r)}\,ds.
\end{align*}

Let us first find a lower bound for $Y'(w)$. Choose any $K'>K$ and use
\[
\big|\partial_s w(s,t)\big|^2\ge 0,\quad
\dfrac{k_*-k(s)}{2}\ge 0,\quad
\int_{\TT\setminus(0,\ell)} |f|\, ds\le \int_{\TT} |f|\, ds,
\]
then
\begin{align*}
	Y'(w)&\ge
\int_{\TT} \bigg[\Big(1-\dfrac{2K}{\alpha}\Big)\big\|\partial_t w(s,\cdot)\big\|^2_{L^2(0,r)}-(1+2\alpha^{-\rho})\Big(\alpha+\frac{k_*}{2}\Big) \big|w(s,0)\big|^2\bigg]\,ds
-K'\alpha^{1+3\tau}\,\|w\|^2_{L^2(\Pi_r)}\\
&\ge 
\Big(1-\dfrac{2K}{\alpha}\Big)\int_{\TT} \bigg[\big\|\partial_t w(s,\cdot)\big\|^2_{L^2(0,r)}-(1+3\alpha^{-\rho})\Big(\alpha+\frac{k_*}{2}\Big) \big|w(s,0)\big|^2\bigg]\,ds
-K'\alpha^{1+3\tau}\,\|w\|^2_{L^2(\Pi_r)}.
\end{align*}
We further have
\begin{align*}
\big\|\partial_t w(s,\cdot)\big\|^2_{L^2(0,r)}&-(1+3\alpha^{-\rho})\Big(\alpha+\frac{k_*}{2}\Big) \big|w(s,0)\big|^2\\
&=(1-3\alpha^{-\rho})\Big( \underbrace{\big\|\partial_t w(s,\cdot)\big\|^2_{L^2(0,r)}-\Big(\alpha+\frac{k_*}{2}\Big) \big|w(s,0)\big|^2}_{\ge 0}\Big)\\
&\quad +3\alpha^{-\rho} \Big(\big\|\partial_t w(s,\cdot)\big\|^2_{L^2(0,r)}- 2\Big(\alpha+\frac{k_*}{2}\Big) \big|w(s,0)\big|^2\Big)
\ge 3\alpha^{-\rho} E_1(T_{2\alpha+k^*}) \big\| w(s,\cdot)\big\|^2_{L^2(0,r)},
\end{align*}
and one obtains $Y'(w)\ge \Big[ \Big(1-\dfrac{2K}{\alpha}\Big)3\alpha^{-\rho} E_1(T_{2\alpha+k^*}) -K'\alpha^{1+3\tau}\Big]\, \big\| w\big\|^2_{L^2(\Pi_r)}$.
Using Lemma \ref{lem1d} we estimate $E_1(T_{2\alpha+k^*,r})\ge -( 2\alpha+k_*)^2-1\ge -5\alpha^2$,
and for a suitable $b>0$ we obtain
\[
Y'(w)\ge -b\alpha^\mu \big\| w\big\|^2_{L^2(\Pi_r)},
\quad
\mu:=\max\{2-\rho,1+3\tau\}\in(1,2);
\]
the inclusion for $\mu$ follows from the fact that the above estimates are valid for any
$\rho\in(0,1)$ and $\tau\in\big(0,\frac{1}{3}\big)$.

Now let us proceed with $Y(g)$. Using \eqref{e1a} and \eqref{f0a}, for
$\nu:=\min\{\sigma,\tau\}$ and sufficiently large but fixed $c,c'>0$ we get
\begin{align*}
	Y(g)&\ge -\bigg[\Big( \alpha +\frac{k_*}{2}\Big)^2+\alpha^{-\sigma}\bigg]\|g\|^2_{L^2(\TT)}+\int_{\TT} \bigg[(1- A \alpha^{-\sigma})\big|g'(s)\big|^2-\Big(A\alpha^{-\sigma}+\dfrac{k(s)^2}{4}\Big) \big|g(s)\big|^2\bigg]\, ds\\
&\quad +\alpha^{-\rho}\Big(\alpha+\frac{k_*}{2}\Big)\Big( 2\alpha+k_* -\alpha^{-\sigma}\Big) \int_{\TT\setminus(0,\ell)} \big|g(s)\big|^2\,ds\\
&\quad+(1-\alpha^{-\tau})\Big(2\alpha+k_* -\alpha^{-\sigma}\Big) \int_0^\ell \dfrac{k_*-k(s)}{2}\,\big|g(s)\big|^2\,ds,\\
&\ge (-\alpha^2-\alpha k_* -c\alpha^{-\nu}) \|g\|^2_{L^2(\TT)} +(1-c'\alpha^{-\nu})\bigg[\int_{\TT} \big|g'(s)\big|^2\,ds +\int_0^\ell \Big(\alpha\big(k_*-k(s)\big)\\
&\quad+\frac{k_*^2-2k(s)k_*-k(s)^2}{4}\Big)\big|g(s)\big|^2\Big)\,ds +\alpha^{2-\rho}\int_{\TT\setminus(0,\ell)} \big|g(s)\big|^2\,ds
\bigg]\\
&=(-\alpha^2-\alpha k_* -c\alpha^{-\nu}) \|g\|^2_{L^2(\TT)}+(1-c'\alpha^{-\nu})\lambda_{\alpha,\rho}(g,g).\qedhere
\end{align*}
\end{proof}

We need some a priori estimates for the eigenvalues $E_n(\Lambda'_\alpha)$ and $E_n(\Lambda_{\alpha,\rho})$. Due to the min-max principle we immediately get
\begin{equation}
	\label{lla}
\begin{aligned}
-a\le &E_n(\Lambda_{\alpha,\rho})\le E_n(\Lambda'_\alpha) \text{ for all $\alpha>0$, $n\in\NN$, $\rho\in(0,1)$,}\quad
a:=\sup_{s\in(0,\ell)}\Big|\frac{k_*^2-2k(s)k_*-k(s)^2}{4}\Big|.
\end{aligned}
\end{equation}
\begin{lemma}\label{apriori}
Let $s_*\in [0,\ell]$ be a maximum of $k$ on $[0,\ell]$. If for some
$m>0$ one has $k(s)-k(s_*)=\cO\big(|s-s_*|^m\big)$ for $s\to s_*$,
then for each $n\in\NN$ and $\rho\in (0,1)$ there holds
	\[
	E_n(\Lambda'_\alpha)=\cO(\alpha^{\frac{2}{2+m}}) \text{ and }
	E_n(\Lambda_{\alpha,\rho})=\cO(\alpha^{\frac{2}{2+m}}) \text{ as }
	\alpha\to+\infty.
	\]
In particular, without any additional assumption one has
	\[
E_n(\Lambda'_\alpha)=\cO(\alpha^{\frac{2}{3}}) \text{ and }
E_n(\Lambda_{\alpha,\rho})=\cO(\alpha^{\frac{2}{3}}) \text{ as }
\alpha\to+\infty.
\]	
\end{lemma}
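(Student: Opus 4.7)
The plan is to combine the a priori lower bound from \eqref{lla} with an upper bound obtained by the min-max principle. Since $\alpha^{2/(2+m)}\to\infty$ as $\alpha\to+\infty$ and \eqref{lla} already gives $E_n(\Lambda_{\alpha,\rho})\ge -a$ and $E_n(\Lambda'_\alpha)\ge -a$, the lower bound is automatically of order $\cO(\alpha^{2/(2+m)})$. Moreover, \eqref{lla} provides $E_n(\Lambda_{\alpha,\rho})\le E_n(\Lambda'_\alpha)$, so it is enough to prove the upper bound $E_n(\Lambda'_\alpha)\le C\alpha^{2/(2+m)}$ for some $C>0$.

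For that upper bound I would work on the semiclassical length scale $\delta:=\alpha^{-1/(2+m)}$, which satisfies $\delta^{-2}=\alpha\delta^m=\alpha^{2/(2+m)}$, i.e.\ kinetic energy on scale $\delta$ and the potential $\alpha(k_*-k)$ are balanced. By hypothesis there exist $c_0,\delta_0>0$ with $|k(s)-k_*|\le c_0|s-s_*|^m$ for $|s-s_*|\le\delta_0$. Fix a constant $C_n>0$ and choose an interval $I\subset[0,\ell]$ of length $|I|=C_n\delta$ having $s_*$ either at its center (when $s_*\in(0,\ell)$) or as an endpoint (when $s_*\in\{0,\ell\}$); for $\alpha$ large enough, $I\subset[0,\ell]$ and $|I|\le\delta_0$. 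The possible endpoint position of $s_*$ is the one point requiring attention, but it is absorbed by this choice of $I$.

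Next I would take $\phi_1,\dots,\phi_n$ to be the first $n$ Dirichlet eigenfunctions of $-\partial_s^2$ on $I$, extended by zero to $(0,\ell)$. They lie in $H^1_0(0,\ell)=\cD(\Lambda'_\alpha)$, are $L^2$-orthonormal, and the $n$-th Dirichlet eigenvalue on $I$ equals $\pi^2 n^2/|I|^2=(\pi n/C_n)^2\delta^{-2}$. Consequently, for every $\phi$ in their linear span with $\|\phi\|_{L^2}=1$,
\[
\int_0^\ell|\phi'(s)|^2\,ds\le (\pi n/C_n)^2\,\alpha^{2/(2+m)}.
\]
Since $\phi$ is supported in $I$, the potential of $\Lambda'_\alpha$ on this support satisfies $\alpha|k_*-k(s)|\le c_0 C_n^m\,\alpha\delta^m=c_0 C_n^m\,\alpha^{2/(2+m)}$, while the term $\frac{k_*^2-2kk_*-k^2}{4}$ is bounded by the constant $a$ from \eqref{lla}. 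Therefore the Rayleigh quotient of every $\phi$ in this $n$-dimensional subspace is bounded by a constant multiple of $\alpha^{2/(2+m)}$ for all large $\alpha$, and the min-max principle yields $E_n(\Lambda'_\alpha)\le C\alpha^{2/(2+m)}$.

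For the ``in particular'' part, since $\partial\Omega$ is $C^4$ the curvature $k$ is of class $C^2$, hence Lipschitz on $[0,\ell]$, so $|k(s)-k(s_*)|\le\|k'\|_\infty|s-s_*|$ for all $s\in[0,\ell]$, i.e.\ the hypothesis of the lemma always holds with $m=1$, which gives the exponent $2/(2+1)=2/3$. The main (rather mild) obstacle in the argument is the endpoint case, which is why the test functions are defined via a one-sided shift of the interval $I$ rather than by symmetric concentration around $s_*$; all other steps reduce to a routine scaling computation.
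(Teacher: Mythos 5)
Your proof is correct and follows essentially the same route as the paper: both reduce to an upper bound on $E_n(\Lambda'_\alpha)$ via \eqref{lla}, restrict to a Dirichlet subinterval of length comparable to $\alpha^{-1/(2+m)}$ adjacent to (or centered at) $s_*$, balance the kinetic term $\pi^2n^2\delta^{-2}$ against the potential bound $\alpha M\delta^m+a$, and obtain the $m=1$ case from the Lipschitz continuity of $k$. The only difference is cosmetic: you spell out the endpoint placement of the test interval and the $n$-dimensional test subspace, which the paper leaves implicit.
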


\begin{proof}
In view of \eqref{lla} it is sufficient to obtain an upper bound for $E_n(\Lambda'_\alpha)$.	
By assumption one can find some $M>0$ and $\delta_0\in (0,\ell)$ with
	\[
	k_*-k(s)=k(s_*)-k(s)\le M |s_*-s|^m \text{ for all $s\in\TT$ with $|s-s_*|<\delta_0$.}
	\]
The length of the intervall $I:=(s_*-\delta_0,s_*+\delta_0)\cap(0,\ell)$
is at least $\delta_0$, so for any $\delta\in(0,\delta_0)$ we can choose a subinterval $I_\delta\subset I$ of length $\delta$. By the min-max principle
one has $E_n(\Lambda'_\alpha)\le E_n(\Lambda'_{\alpha,\delta})$,
where $\Lambda'_{\alpha,\delta}$ is the operator
\[
	\Lambda'_{\alpha,\delta}:\ f\mapsto -f''+\Big(\alpha (k_*-k)+\frac{k_*^2-2kk_*-k^2}{4}\Big)f
\]
in $L^2(I_\delta)$ with Dirichlet boundary conditions. For any $s\in I_\delta$ we have
\[
\alpha \big(k_*-k(s)\big)+\frac{k_*^2-2k(s)k_*-k(s)^2}{4}\le \alpha M\delta^m+a,
\]
which gives $\Lambda'_{\alpha,\delta}\le \pi^2 n^2\delta^{-2}+\alpha M\delta^m+a$,
and by choosing $\delta:=\alpha^{-\frac{1}{2+m}}$ we obtain the first claim.

The function $k$ is $C^2$ and, therefore, Lipschitz, so the assumption is always satisfied for $m=1$, which gives the last claim.
\end{proof}

\begin{proposition}\label{prop36}
Let $n\in\NN$, $\rho\in(0,1)$ and $\tau\in(0,\frac{1}{3})$. Then there are $c,c'>0$ and $\alpha_0>0$ such that for any $\alpha>\alpha_0$ there holds
\begin{align*}
E_n(Q_\alpha)&\ge -\alpha^2-k_*\alpha +(1-c'\alpha^{-\tau})E_n(\Lambda_{\alpha,\rho})-c\alpha^{-\tau}.
\end{align*}
\end{proposition}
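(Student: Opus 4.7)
The plan is to chain together three ingredients already established: the boundary-localisation reduction to $Q_{\alpha,\alpha^{-\sigma}}$ from Corollary \ref{cor4}, the bracketing $E_n(Q_{\alpha,\alpha^{-\sigma}})=E_n(P_{\alpha,\alpha^{-\sigma}})\ge E_n(P^-_{\alpha,\alpha^{-\sigma}})$ from \eqref{qqar2}--\eqref{qqar3}, and the form-level estimate of Lemma \ref{propdown}. The final step is to apply the max--min characterisation of $E_n(P^-_{\alpha,\alpha^{-\sigma}})$ with a test subspace tailored to the effective one-dimensional operator $\Lambda_{\alpha,\rho}$.

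Concretely, I would fix any $\sigma\in(0,1)$ (its precise value matters only through the super-polynomially small $\cO(\alpha^{-M})$ error of Corollary~\ref{cor4}) and set $r:=\alpha^{-\sigma}$. Let $g_1,\dots,g_{n-1}$ be an $L^2(\TT)$-orthonormal family of eigenfunctions of $\Lambda_{\alpha,\rho}$ for its first $n-1$ eigenvalues, and let $\psi$ be the normalised groundstate of $T_{\alpha+k_*/2,r}$ supplied by Lemma \ref{lem1d}. The $(n-1)$-dimensional subspace $F:=\mathrm{span}(g_1\otimes\psi,\dots,g_{n-1}\otimes\psi)\subset L^2(\Pi_r)$ will serve as the test subspace in the max--min formula. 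For any $v\in\Tilde H^1_0(\Pi_r)$ with $v\perp_{L^2(\Pi_r)}F$, the decomposition \eqref{vwg} together with $\int_0^r\overline{\psi(t)}\,w(s,t)\,dt=0$ and $\|\psi\|_{L^2(0,r)}=1$ forces $g\in L^2(\TT)$ to be orthogonal to every $g_i$, so the spectral theorem for $\Lambda_{\alpha,\rho}$ yields $\lambda_{\alpha,\rho}(g,g)\ge E_n(\Lambda_{\alpha,\rho})\|g\|^2_{L^2(\TT)}$.

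Inserting this into Lemma \ref{propdown} gives
\[
p^-_{\alpha,r}(v,v) \ \ge\ A\,\|g\|^2_{L^2(\TT)} - b\alpha^\mu\|w\|^2_{L^2(\Pi_r)},\qquad A := -\alpha^2-\alpha k_* - c\alpha^{-\nu} + (1-c'\alpha^{-\nu})E_n(\Lambda_{\alpha,\rho}),
\]
with $\mu\in(1,2)$ and $\nu=\min(\rho,\tau)$. The decisive observation is that Lemma \ref{apriori} provides $E_n(\Lambda_{\alpha,\rho})=\cO(\alpha^{2/3})$, hence $A=-\alpha^2+o(\alpha^2)$; in particular $A\le -b\alpha^\mu$ for all sufficiently large $\alpha$. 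The right-hand side above is then bounded below by $A(\|g\|^2+\|w\|^2)=A\,\|v\|^2_{L^2(\Pi_r)}$ in view of \eqref{eqsum}. The max--min principle with test subspace $F$ yields $E_n(P^-_{\alpha,r})\ge A$, and chaining with \eqref{qqar3} and Corollary~\ref{cor4} (absorbing the $\cO(\alpha^{-M})$ remainder into $-c\alpha^{-\nu}$) produces
\[
E_n(Q_\alpha)\ \ge\ -\alpha^2-\alpha k_*+(1-c'\alpha^{-\nu})E_n(\Lambda_{\alpha,\rho})-c\alpha^{-\nu}.
\]

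Passing from the exponent $\nu$ to the requested $\tau$ is routine: since $|E_n(\Lambda_{\alpha,\rho})|=\cO(\alpha^{2/3})$ is uniformly controlled (with the lower bound supplied by \eqref{lla}), after enlarging $c$ and $c'$ one can replace $\alpha^{-\nu}$ by $\alpha^{-\tau}$ in both positions. The most delicate point I expect is precisely the inequality $A\le -b\alpha^\mu$, which depends essentially on the a priori bound $E_n(\Lambda_{\alpha,\rho})=\cO(\alpha^{2/3})\ll\alpha^2$: a substantially weaker a priori bound would let the $-b\alpha^\mu\|w\|^2$ contribution from Lemma \ref{propdown} compete with the $A\|g\|^2$ term and spoil the dominant $-\alpha^2-\alpha k_*$ asymptotics.
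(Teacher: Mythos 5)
Your overall architecture is the same as the paper's: localize via Corollary \ref{cor4}, pass to $P^-_{\alpha,\alpha^{-\sigma}}$ through \eqref{qqar2}--\eqref{qqar3}, feed the decomposition \eqref{vwg} into Lemma \ref{propdown}, and use the a priori bound of Lemma \ref{apriori} to ensure that the $-b\alpha^{\mu}\|w\|^2$ term (with $\mu<2$) cannot interfere with the $-\alpha^2$ regime. Your use of the dual max--min with the $(n-1)$-dimensional test space $\mathrm{span}(g_1\otimes\psi,\dots,g_{n-1}\otimes\psi)$ is a correct and essentially equivalent repackaging of the paper's comparison with the direct sum $L_\alpha\oplus(-b\alpha^{\mu}I)$: the orthogonality bookkeeping ($v\perp F$ iff $g\perp g_i$, via \eqref{eqsum}) and the inequality $A\le-b\alpha^{\mu}$ for large $\alpha$ are both fine.

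The gap is in your last step. The claim that one can ``routinely'' replace $\alpha^{-\nu}$ by $\alpha^{-\tau}$ after enlarging $c,c'$ is not valid when $\nu<\tau$ (in your convention $\nu=\min\{\rho,\tau\}$, so whenever $\rho<\tau$): the term you must then discard is $c'(\alpha^{-\nu}-\alpha^{-\tau})E_n(\Lambda_{\alpha,\rho})$, and in the situations this proposition is meant for (e.g.\ Theorem \ref{kmax}, where $E_n(\Lambda_{\alpha,\rho})$ grows like $\alpha^{2/3}$) this is of order $\alpha^{2/3-\nu}$, far larger than the admissible error $c\alpha^{-\tau}$; even when $E_n(\Lambda_{\alpha,\rho})$ stays bounded the loss is only $\cO(\alpha^{-\nu})$, not $\cO(\alpha^{-\tau})$, so no enlargement of constants helps. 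The correct resolution is that the exponent actually produced by the proof of Lemma \ref{propdown} is $\nu=\min\{\sigma,\tau\}$ (the $\rho$ in its displayed $\nu$ is inconsistent with the proof: the $\alpha^{-\sigma}$ errors come from $r=\alpha^{-\sigma}$ in \eqref{e1a}, \eqref{f0a} and the factors $1\pm Ar$, while $\rho$ enters only the barrier $\alpha^{2-\rho}$ and $\mu$). Since $\sigma$ is at your disposal, you simply choose $\sigma>\tau$ so that $\nu=\tau$ on the nose, which is exactly what the paper does; in particular your parenthetical remark that the value of $\sigma$ matters only through the $\cO(\alpha^{-M})$ error of Corollary \ref{cor4} is the root of the problem. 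With that single correction (choose $\sigma>\tau$, no exponent conversion needed), your argument is complete.
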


\begin{proof}
	Let $\rho\in (0,1)$. Denote $\mu:=\max\{2-\rho,1+3\mu\}$ and $\nu:=\min\{\sigma,\tau\}$. Let $\cG\subset L^2(\Pi_r)$  be the closure of the subspace of all $w$ in \eqref{vwg} with $v\in \Tilde H^1_0(\Pi_r)$, and let $I:\cG\to\cG$ be the identity map.
	The decomposition \eqref{vwg} together with Proposition \ref{propdown} show $E_n(P^-_{\alpha,\alpha^{-\sigma}})\ge E_n(L_\alpha\oplus -b\alpha^\mu I)$ with
	\begin{gather*}
		L_\alpha:=-\alpha^2-\alpha k_* +(1-c'\alpha^{-\nu})\,\Lambda_{\alpha,\rho}
		-c\alpha^{-\nu}.
	\end{gather*}
By Lemma \ref{apriori}	we have $E_n(L_\alpha)=-\alpha^2+\cO(\alpha)<-b\alpha^\mu$,
so
\begin{gather}
E_n(L_\alpha\oplus -b\alpha^\mu I)
	\equiv \min\big\{E_n(L_\alpha),-b\alpha^\mu\big\}=E_n(L_\alpha),\nonumber\\
E_n(P^-_{\alpha,\alpha^{-\sigma}})\ge E_n(L_\alpha).\label{low3}
\end{gather}
In addition, we can choose $\sigma>\tau$ to have $\nu=\tau$. Using Corollary \ref{cor4} we obtain for any $M>0$:
\begin{align*}
E_n(Q_\alpha)&=E_n(Q_{\alpha,\alpha^{-\sigma}})+\cO(\alpha^{-M})\stackrel{\eqref{qqar2}}{=}E_n(P_{\alpha,\alpha^{-\sigma}})+\cO(\alpha^{-M})\\
&\stackrel{\eqref{qqar3}}{\ge}E_n(P^-_{\alpha,\alpha^{-\sigma}})+\cO(\alpha^{-M})
\stackrel{\eqref{low3}}{\ge}E_n(L_\alpha)+\cO(\alpha^{-M}),
\end{align*}
and we obtain the claim by taking $M>\tau$.
\end{proof}

We summarize Propositions \ref{propup} and \ref{prop36} as follows:
\begin{corollary}\label{cor}
Let $n\in\NN$, $\rho,\sigma\in(0,1)$ and $\tau\in(0,\frac{1}{3})$. Then there are $c,c',b,b'>0$ such that for $\alpha\to+\infty$ one has
\begin{align*}
(1-c'\alpha^{-\tau})E_n(\Lambda_{\alpha,\rho})-c\alpha^{-\tau}&\le E_n(Q_\alpha) +\alpha^2 +k_*\alpha\le 	(1+b\alpha^{-\sigma})E_n(\Lambda'_\alpha) + b'\alpha^{-\sigma}.
\end{align*}
\end{corollary}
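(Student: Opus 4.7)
The statement is a direct packaging of the upper bound given by Proposition \ref{propup} and the lower bound given by Proposition \ref{prop36}; my plan is therefore very short and amounts to two rearrangements.

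For the upper inequality, I start from Proposition \ref{propup}, which for any $\sigma\in(0,1)$, any $n\in\NN$, and all sufficiently large $\alpha$ provides constants $A,B>0$ (depending on $n$ and $\sigma$) with
\[
E_n(Q_\alpha)\le -\alpha^2-\alpha k_*+(1+A\alpha^{-\sigma})E_n(\Lambda'_\alpha)+B\alpha^{-\sigma}.
\]
Transposing $\alpha^2+k_*\alpha$ to the left-hand side and setting $b:=A$, $b':=B$ immediately gives the right-hand inequality of the corollary. No use of the parameters $\rho$ or $\tau$ is needed at this step.

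For the lower inequality, I invoke Proposition \ref{prop36} with the $\rho\in(0,1)$ and $\tau\in(0,\frac{1}{3})$ that are given in the hypotheses of the corollary. It yields constants $c,c'>0$ and $\alpha_0>0$ such that
\[
E_n(Q_\alpha)\ge -\alpha^2-k_*\alpha+(1-c'\alpha^{-\tau})E_n(\Lambda_{\alpha,\rho})-c\alpha^{-\tau}
\]
for all $\alpha>\alpha_0$. Moving $\alpha^2+k_*\alpha$ to the left-hand side reproduces the left-hand inequality of the corollary with the same constants $c,c'$.

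I do not anticipate any substantive obstacle: the parameter ranges $\rho,\sigma\in(0,1)$ and $\tau\in(0,\frac{1}{3})$ in the corollary coincide exactly with the hypotheses of the two propositions, and no interdependence between $\rho$, $\sigma$, $\tau$ is imposed here. The constants $b,b',c,c'$ are those supplied by the two propositions. The whole purpose of the corollary is to display the two bounds in a symmetric two-sided form, which will be convenient in Section \ref{sec4} when comparing the eigenvalues of $\Lambda_{\alpha,\rho}$ and $\Lambda'_\alpha$.
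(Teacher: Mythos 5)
Your proposal is correct and matches the paper exactly: the corollary is stated there as a direct summary of Propositions \ref{propup} and \ref{prop36}, obtained precisely by transposing $\alpha^2+k_*\alpha$ in each of the two bounds. Nothing further is needed.
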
	

\section{Reduction to an effective operator on $(0,\ell)$}\label{sec4}

Now we are going to show that $E_n(\Lambda_{\alpha,\rho})$ and $E_n(\Lambda'_\alpha)$ are asymptotically close to each other. This will allow
to transform the two-sided estimate of Corollary \ref{cor} into an asymptotic expansion.  Our analysis will be based on the following result, see e.g. \cite[Lemma 2.1]{Ex03}:

\begin{proposition} \label{Comp}
	Let	$T$ be a non-negative self-adjont operator with compact resolvent in an infinite-dimensional Hilbert space $\cH$, defined by a closed hermitian sesquilinear form $t$, and $T'$ be a lower semibounded
	self-adjoint operator with compact resolvent in an infinite-dimensional Hilbert space $\cH'$, defined by a closed hermitian sesquilinear form $t'$.
	Assume that there are a linear map $J: \cD(t)\to \cD(t')$
	and  $\delta_1,\delta_2\in [0,+\infty)$ such that
	for all $g\in\cD(t)$ there holds
	\begin{align*}
		\|g \|_\cH^2  - \|Jg\|_{\cH'}^2&\le\delta_1 \Big(t(g,g) +  \|g \|_\cH^2\Big), \\
		t'(Jg,Jg) - t(g,g) &\le \delta_2 \Big( t(g,g) +  \|g \|_\cH^2\Big).
	\end{align*}
	Then for any $n\in\NN$ with 
	\begin{equation}
		\label{condn}
		\delta_1 \big(E_n(T)+1\big)<1
	\end{equation}
	one has
	\[
	E_n(T') \leq E_n(T) + 
	\frac{\Big(\delta_1 E_n(T)+\delta_2\Big)\Big( E_n(T)+ 1\Big)}{1-\delta_1\big( E_n(T)+ 1\big)}.
	\]
\end{proposition}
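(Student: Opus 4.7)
My plan is to use the min-max characterisation of $E_n(T')$, with the comparison subspace obtained by pushing forward the first $n$ eigenvectors of $T$ through the map $J$. The two inequalities provided then let us control both the numerator and denominator of the Rayleigh quotient of $T'$ on this subspace.

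First I would let $V \subset \cD(t)$ be the $n$-dimensional subspace spanned by eigenvectors of $T$ associated with $E_1(T), \ldots, E_n(T)$. The spectral theorem for $T$ gives the key monotonicity
\[
t(g,g) \le E_n(T)\|g\|_\cH^2 \quad \text{for all } g \in V.
\]
Combined with the first hypothesis on $J$, this yields, for every $g \in V$,
\[
\|Jg\|_{\cH'}^2 \ge \|g\|_\cH^2 - \delta_1\bigl(t(g,g) + \|g\|_\cH^2\bigr) \ge \bigl(1 - \delta_1(E_n(T)+1)\bigr)\|g\|_\cH^2.
\]
Under the hypothesis \eqref{condn}, the prefactor is strictly positive, so $J$ is injective on $V$ and $J(V)$ is an $n$-dimensional subspace of $\cD(t')$.

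Next I would apply the min-max principle to $T'$ using the test subspace $J(V)$:
\[
E_n(T') \le \max_{g \in V,\, g \ne 0} \frac{t'(Jg, Jg)}{\|Jg\|_{\cH'}^2}.
\]
The second hypothesis, combined again with $t(g,g) \le E_n(T)\|g\|_\cH^2$, bounds the numerator by
\[
t'(Jg,Jg) \le t(g,g) + \delta_2\bigl(t(g,g) + \|g\|_\cH^2\bigr) \le \bigl(E_n(T) + \delta_2(E_n(T)+1)\bigr)\|g\|_\cH^2,
\]
while the lower bound on $\|Jg\|_{\cH'}^2$ above handles the denominator. Putting both estimates together gives
\[
E_n(T') \le \frac{E_n(T) + \delta_2(E_n(T)+1)}{1 - \delta_1(E_n(T)+1)}.
\]

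Finally, I would rewrite the right-hand side in the form claimed in the statement, by adding and subtracting $E_n(T)$ in the numerator: writing $e := E_n(T)$, a direct computation shows
\[
\frac{e + \delta_2(e+1)}{1 - \delta_1(e+1)} = e + \frac{(\delta_1 e + \delta_2)(e+1)}{1 - \delta_1(e+1)},
\]
which is the desired inequality. I do not foresee a serious obstacle: the whole argument is essentially a well-chosen application of min-max, and the only subtle point is verifying that $J|_V$ is injective, which is precisely what condition \eqref{condn} is tailored to guarantee.
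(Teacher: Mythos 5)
Your argument is correct: the paper itself gives no proof of Proposition \ref{Comp} (it is quoted from \cite[Lemma 2.1]{Ex03}), and your min--max computation with the test space $J(V)$ is exactly the standard proof of that lemma, including the correct use of \eqref{condn} to get injectivity of $J$ on $V$ and the algebraic rewriting at the end. The only point worth making explicit is that passing from $t'(Jg,Jg)\le A\|g\|_\cH^2$ and $\|Jg\|_{\cH'}^2\ge B\|g\|_\cH^2$ to $t'(Jg,Jg)/\|Jg\|_{\cH'}^2\le A/B$ uses $A=E_n(T)+\delta_2(E_n(T)+1)\ge 0$, which holds precisely because $T$ is assumed non-negative and $\delta_2\ge 0$.
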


\begin{lemma}\label{lem11}
	For any fixed $n\in\NN$ and $\rho\in(0,\frac{1}{3})$ one has for $\alpha\to+\infty$:
	\begin{gather*}
		E_n(\Lambda_{\alpha,\rho})=E_n(\Lambda'_\alpha)+R_\alpha,\\
R_\alpha=\begin{cases}\cO(\alpha^{-\frac{1-\rho}{4}}), &\text{if }E_n(\Lambda'_\alpha)=\cO(1),\\
	\cO\Big( \big(\alpha^{-\frac{3-\rho}{4}} E_n(\Lambda'_\alpha)+\alpha^{-\frac{1-\rho}{4}}\big)E_n(\Lambda'_\alpha)\Big), & \text{if } E_n(\Lambda'_{\alpha})\to +\infty.
	\end{cases}
	\end{gather*}
\end{lemma}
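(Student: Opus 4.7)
My plan is to split the equality into two inequalities and treat them separately. The direction $E_n(\Lambda_{\alpha,\rho})\le E_n(\Lambda'_\alpha)$ is already \eqref{lla}: any $f\in H^1_0(0,\ell)$ extended by zero to $\TT$ gives a competitor in $H^1(\TT)$ whose $\lambda_{\alpha,\rho}$-form equals $\lambda'_\alpha(f,f)$, so min--max yields the bound. For the reverse I will apply Proposition~\ref{Comp} after shifting both operators by the constant $a$ from \eqref{lla} to make them non-negative, namely $T:=\Lambda_{\alpha,\rho}+a$ on $\cH:=L^2(\TT)$ with form $t(g,g):=\lambda_{\alpha,\rho}(g,g)+a\|g\|^2_{L^2(\TT)}$, and similarly $T':=\Lambda'_\alpha+a$ on $\cH':=L^2(0,\ell)$ with form $t'$.

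The identification map $J:H^1(\TT)\to H^1_0(0,\ell)$ I use is defined by subtracting boundary corrections:
\[
(Jg)(s):= g(s) - g(0)\,\phi_\delta(s) - g(\ell)\,\phi_\delta(\ell-s),\qquad s\in(0,\ell),
\]
where $\phi_\delta(s)=\phi(s/\delta)$ for a fixed $\phi\in C^\infty([0,\infty))$ with $\phi(0)=1$ and $\supp\phi\subset[0,1)$, so that $Jg$ vanishes at both $s=0$ and $s=\ell$. The cutoff scale $\delta$ will eventually be chosen as $\delta:=\alpha^{-1/2}$, the value that balances the competing error terms.

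The decisive a priori inputs are the smallness estimates forced by the barrier $\alpha^{2-\rho}$ on $\TT\setminus(0,\ell)$. Directly from the definition of $t$ one obtains $\int_{\TT\setminus(0,\ell)}|g|^2\le \alpha^{-(2-\rho)}\bigl(t(g,g)+\|g\|^2\bigr)$ and $\int_{\TT}|g'|^2\le t(g,g)+\|g\|^2$; applying Lemma~\ref{lemsob} on an exterior subinterval of length $b$ abutting $s=0$ (or $s=\ell$) and optimizing $b$ then yields
\[
|g(0)|^2+|g(\ell)|^2\le C\alpha^{-(1-\rho/2)}\bigl(t(g,g)+\|g\|^2\bigr).
\]
Combined with the elementary pointwise bound $|g(s)|\le|g(0)|+\sqrt{s}\,\|g'\|_{L^2(0,s)}$ on $(0,\delta)$, a direct termwise expansion of $\|g\|^2_{L^2(\TT)}-\|Jg\|^2_{L^2(0,\ell)}$ and of $t'(Jg,Jg)-t(g,g)$ produces, at the choice $\delta=\alpha^{-1/2}$, the hypotheses of Proposition~\ref{Comp} with $\delta_1=\cO(\alpha^{-(3-\rho)/4})$ and $\delta_2=\cO(\alpha^{-(1-\rho)/4})$.

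Proposition~\ref{Comp} then gives $E_n(\Lambda'_\alpha)\le E_n(\Lambda_{\alpha,\rho})+R_\alpha$ with $R_\alpha$ of the stated form: when $E_n(\Lambda'_\alpha)=\cO(1)$ the fraction in the proposition collapses to $\cO(\delta_1+\delta_2)=\cO(\alpha^{-(1-\rho)/4})$, while when $E_n(\Lambda'_\alpha)\to+\infty$ the leading contribution is $\cO(\delta_1 E_n^2+\delta_2 E_n)=\cO\bigl((\alpha^{-(3-\rho)/4}E_n(\Lambda'_\alpha)+\alpha^{-(1-\rho)/4})E_n(\Lambda'_\alpha)\bigr)$; the a priori bound $E_n(\Lambda'_\alpha)=\cO(\alpha^{2/3})$ from Lemma~\ref{apriori} together with the assumption $\rho<\tfrac{1}{3}$ ensures $\delta_1(E_n(\Lambda'_\alpha)+1)\to 0$, so the applicability condition \eqref{condn} is satisfied for $\alpha$ large. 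The main obstacle I expect is the potential cross term $\int_0^\ell V_\alpha(|g|^2-|Jg|^2)$ arising in $t'(Jg,Jg)-t(g,g)$: since $V_\alpha=\cO(\alpha)$ on $(0,\ell)$, a crude Cauchy--Schwarz bound on $\int V_\alpha(g-Jg)\bar g$ is not small enough, and one must exploit the explicit form $g-Jg=g(0)\phi_\delta+g(\ell)\phi_\delta(\ell-\cdot)$ to factor out the small boundary values first and then combine them with the pointwise control of $g$ on $(0,\delta)$ to keep this contribution of order $\alpha^{-(1-\rho)/4}(t(g,g)+\|g\|^2)$.
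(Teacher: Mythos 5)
Your proposal is correct and follows essentially the same route as the paper: the easy direction via \eqref{lla}, then Proposition~\ref{Comp} applied to $\Lambda_{\alpha,\rho}+a$ and $\Lambda'_\alpha+a$ with a cutoff-correction map at scale $\alpha^{-1/2}$, boundary values controlled through Lemma~\ref{lemsob} and the barrier $\alpha^{2-\rho}$, yielding the same $\delta_1=\cO(\alpha^{-(3-\rho)/4})$, $\delta_2=\cO(\alpha^{-(1-\rho)/4})$, and the condition \eqref{condn} checked via Lemma~\ref{apriori} and $\rho<\tfrac13$. The only cosmetic differences are that the paper keeps the cutoff scale $\alpha^{-q}$ free and optimizes to $q=\tfrac12$ afterwards, and handles the cross terms with weighted Young-type inequalities rather than your pointwise bound; both lead to the same remainders.
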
	

\begin{proof}
Remark first that $\Lambda'_\alpha$ is monotonically increasing with respect to $\alpha$, so each eigenvalue
is a monotonically increasing function of $\alpha$ and either $E_n(T'_\alpha)=\cO(1)$ or $E_n(\Lambda'_{\alpha})\to +\infty$,
so the claim covers all possible cases.

In view of \eqref{lla} it is sufficient to find an upper bound of for $E_n(\Lambda'_{\alpha})$
in terms of $E_n(\Lambda_{\alpha,\rho})$. We will apply Proposition \ref{Comp} to the operators
\begin{equation}
T:=\Lambda_{\alpha,\rho}+a,
\qquad
T':=\Lambda'_{\alpha}+a
\label{ttaa}
\end{equation}
 with $a$ from \eqref{lla}. For convenience we denote
\begin{align*}
U&:=k_*-k, & V&:=\frac{k_*^2-2kk_*-k^2}{4},
\end{align*}
and recall that the sesquilinear form $t$ for $T$ and the sesquilinear form $t'$ for $T'$ are
given by
\begin{align*}
	t(g,g)&=\int_0^{\ell}
	\Big(
	|g'|^2	+(\alpha U+V+a)|g|^2\Big)\,ds+
	\int_{\TT\setminus(0,\ell)}
	\Big(
	\big|g'|^2+(\alpha^{2-\rho}+a)|g|^2\Big)\,ds \text{ with }\cD(t)=H^1(\TT),\\
	t'(g,g)&=\int_0^{\ell}
	\Big(|g'|^2	+(\alpha U+V+a)|g|^2\Big)\,ds\quad\text{with } \cD(t')=H^1_0(0,\ell).	
\end{align*}

Pick any $h\in C^\infty_c(\RR)$ with $\supp h\in(-\frac{\ell}{2},\frac{\ell}{2})$ and $h(0)=1$ and define
a linear map $J:\ H^1(\TT)\to H^1_0(0,\ell)$ by
\[
(Jg)(s):=g(s)-g(0)h(\alpha^q s)-g(\ell)h\big(\alpha^q(s-\ell)\big),\quad s\in(0,\ell),
\]
with a parameter $q>0$ whose value will be chosen later.
Due to Lemma \ref{lemsob} we have
\begin{equation}
	\label{sob2}
\begin{aligned}	
\big|g(0)\big|^2&+\big|g(\ell)\big|^2\le 2\delta\int_{\TT\setminus(0,\ell)}\big|g'(s)\big|^2\,ds+
4\delta^{-1}\int_{\TT\setminus(0,\ell)}\big|g(s)\big|^2\,ds\\
&\quad
\text{ for any $g\in H^1\big(\TT\setminus(0,\ell)\big)$ and any $0<\delta<L-\ell$.}
\end{aligned}
\end{equation}

Using the inequality $|a+b|^2\ge (1-\varepsilon)|a|^2-\varepsilon^{-1}|b|^2$ (valid for any $a,b\in\CC$ and $\varepsilon>0$)
we estimate for any $g\in H^1(\TT_\alpha)$:
\begin{align*}
	\|Jg&\|^2_{L^2(0,\ell_\alpha)}=\int_0^{\ell}\Big|g(s)-\big[g(0)h(\alpha^q s)+g(\ell)h\big(\alpha^q(s-\ell)\big)\big]\Big|^2\,ds\\
	&\ge (1-\varepsilon)\int_0^{\ell}\big|g(s)\big|^2\, ds
	-\varepsilon^{-1}\underbrace{\int_0^{\ell}\Big|g(0)h(\alpha^q s)+g(\ell)h\big(\alpha^q(s-\ell)\big)\Big|^2\,ds}_{=:I},
\end{align*}
therefore,
\begin{equation}
	\label{ggg1}
	\|g\|^2_{L^2(\TT)}-\|Jg\|^2_{L^2(0,\ell)}\le
	\int_{\TT\setminus(0,\ell)}\big|g(s)\big|^2\,ds
	+\varepsilon\int_0^{\ell}\Big|g(s)\big|^2\,ds+\varepsilon^{-1} I.
\end{equation}
Using $|a+b|^2\le 2|a|^2+2|b|^2$ we obtain
\begin{align*}
I&\le 2\int_0^{\ell}\big|h(\alpha^q s)\big|^2\,ds\cdot \big|g(0)\big|^2+
	2\int_0^{\ell}\Big|h\big(\alpha^q(s-\ell)\big)\Big|^2\,ds\cdot \big|g(\ell)\big|^2.
\end{align*}
Using the substitution $s:=\alpha^{-q}t$ we estimate
\[
\int_0^{\ell}\big|h(\alpha^q s)\big|^2\,ds=\alpha^{-q} \int_0^{\ell}\big|h(t)\big|^2\,dt=\cO(\alpha^{-q})
\]
and analogously
\[
\int_0^{\ell}\Big|h\big(\alpha^q(s-\ell)\big)\Big|^2\,ds=\cO(\alpha^{-q}),
\]
which yields (with suitable $\alpha$-independent $c_j>0$)
\begin{align*}
I&\le c_1\alpha^{-q}\Big( \big|g(0)\big|^2+\big|g(\ell)\big|^2\Big)
\stackrel{\eqref{sob2}}{\le} c_2\alpha^{-q}\bigg(\delta\int_{\TT\setminus(0,\ell)}\big|g'(s)\big|^2\,ds+
\delta^{-1}\int_{\TT\setminus(0,\ell)}\big|g(s)\big|^2\,ds\bigg).
\end{align*}
The substitution into \eqref{ggg1} gives
\begin{align*}
	\|g\|^2_{L^2(\TT)}-\|Jg\|^2_{L^2(0,\ell)}&\le \varepsilon\int_0^{\ell}\big|g(s)\big|^2\,ds +c_2\varepsilon^{-1}\delta \alpha^{-q}\int_{\TT\setminus(0,\ell)}\big|g'(s)\big|^2\,ds\\
	&\quad+(1+c_2\varepsilon^{-1}\delta^{-1} \alpha^{-q})	\int_{\TT\setminus(0,\ell)}\big|g(s)\big|^2\,ds
\end{align*}
Each of the first two  integrals on the right-hand side is bounded from above by $t(g,g)+\|g\|^2_{L^2(\TT)}$, while for the last integral we have
\[\int_{\TT\setminus(0,\ell)}\big|g(s)\big|^2\,ds
\le \alpha^{\rho-2}t(g,g).
\]
This gives
$\|g\|^2_{L^2(\TT)}-\|Jg\|^2_{L^2(0,\ell)}\le c_3(\varepsilon+\varepsilon^{-1}\delta\alpha^{-q} +\alpha^{\rho-2} +\varepsilon^{-1}\delta^{-1} \alpha^{-q}\alpha^{\rho-2})\big(t(g,g)+\|g\|^2_{L^2(\TT)}\big)$.
We optimize the right-hand side by taking
$\varepsilon=\varepsilon^{-1}\delta\alpha^{-q}=\varepsilon^{-1}\delta^{-1} \alpha^{-q}\alpha^{\rho-2}$,
i.e. $\delta:=\alpha^{\frac{\rho-2}{2}}$, $\varepsilon:=\alpha^{\frac{\rho-2q-2}{4}}$,
and arrive at
\begin{equation}
		\label{est1}
		\|g\|^2_{L^2(\TT)}-\|Jg\|^2_{L^2(0,\ell)}\le c_4
		 \alpha^{\frac{\rho-2q-2}{4}}\Big( t(g,g)+\|g\|^2_{L^2(\TT)}\Big).
\end{equation}	

With the help of the inequality
$|a+b|^2\le (1+\varepsilon)|a|^2+2\varepsilon^{-1}|b|^2$ for any $a,b\in\CC,\ \varepsilon\in(0,1)$,
we obtain
\begin{align*}
	t'&(Jg,Jg)=\int_0^{\ell}\Big|g'(s)-\alpha^{q}\big[g(0)h'(\alpha^q s)+g(\ell)h'\big(\alpha^q(s-\ell)\big)\big]\Big|^2ds\\
	&\quad \qquad+\int_0^{\ell}\Big(\alpha U(s)+V(s)+a\Big)\Big|g(s)-\big[g(0)h(\alpha^q s)+g\big(\alpha^q(s-\ell)\big)\big]\Big|^2ds\\
	&\le (1+\varepsilon)\int_0^{\ell}\big|g'(s)\big|^2\, 
	ds+2\varepsilon^{-1}\alpha^{2q}\int_0^{\ell}\Big|g(0)h'(\alpha^q s)+g(\ell)h'\big(\alpha^q(s-\ell)\big)\Big|^2ds\\
	&\quad+ (1+\varepsilon)\int_0^{\ell}\Big(\alpha U(s)+V(s)+a\Big)\big|g(s)\big|^2 ds\\
	&\quad+2\varepsilon^{-1}\int_0^{\ell}\Big(\alpha U(s)+V(s)+a\Big)\Big|g(0)h(\alpha^q s)+g(\ell)h\big(\alpha^q(s-\ell)\big)\Big|^2ds,
\end{align*}
and then
\begin{equation}
	\label{tpm}
\begin{aligned}
	t'(Jf,Jf)-t(f,f)&\le \varepsilon\int_0^{\ell}\Big(|g'|^2 +(\alpha U+V+a)|g|^2\Big)\,ds\\
	&\quad +2\varepsilon^{-1}\alpha^{2q}\underbrace{\int_0^{\ell}\Big[g(0)h'(\alpha^q s)+g(\ell)h'\big(\alpha^q(s-\ell)\big)\Big|^2ds}_{=:I_1}\\
	&\quad +2\varepsilon^{-1}\underbrace{\int_0^{\ell}\big(\alpha U(s)+V(s)+a\big)\Big|g(0)h(\alpha^q s)+g(\ell)h\big(\alpha^q(s-\ell)\big)\Big|^2ds}_{=:I_2}.
\end{aligned}
\end{equation}
We have
\begin{align*}
	I_1\le 2\int_0^{\ell}\big|h'(\alpha^q s)\big|^2\,ds&\cdot \big|g(0)\big|^2
	+2\int_0^{\ell}\Big|h'\big(\alpha^q(s-\ell)\big)\Big|^2\,ds\cdot \big|g(\ell)\big|^2,\\
	\int_0^{\ell}\big|h'(\alpha^q s)\big|^2\,ds&\stackrel{s=\alpha^{-q}t}{=}\alpha^{-q} \int_0^{\ell}\big|h'(t)\big|^2\,dt=\cO(\alpha^{-q}),\\
\int_0^{\ell}\Big|h'\big(\alpha^q(s-\ell)\big)\Big|^2\,ds
&\stackrel{s=\ell+\alpha^{-q}t}{=}
\int_{-\ell}^{0}\Big|h'\big(\alpha^q(t)\big)\Big|^2\,ds=\cO(\alpha^{-q}),
\end{align*}
hence, $I_1\le c_5\alpha^{-q}\Big(\big|g(0)\big|^2+\big|g(\ell)\big|^2\Big)$. Similarly,
\begin{align*}
	I_2&\le c_4\alpha\bigg(\int_0^{\ell_\alpha}\big|h(\alpha^q s)\big|^2\,ds\cdot \big|g(0)\big|^2
	+\int_0^{\ell_\alpha}\Big|h\big(\alpha^q(s-\ell)\big)\Big|^2\,ds\cdot \big|g(\ell)\big|^2\bigg)\le c_6\alpha^{1-q}\Big(\big|g(0)\big|^2+\big|g(\ell)\big|^2\Big).
\end{align*}
The substitution into \eqref{tpm} yields
\begin{align*}
		t'(Jg,Jg)-t(g,g)&\le \varepsilon\int_0^{\ell}\Big(|g'|^2 +(\alpha U+V+a)|g|^2\Big)\,ds +c_7\varepsilon^{-1}(\alpha^{q}+\alpha^{1-q})\Big(\big|g(0)\big|^2+\big|g(\ell)\big|^2\Big).
\end{align*}
The optimization with respect $q$ implies the choice $q:=\frac{1}{2}$, and
\begin{align}
t'(Jg,Jg)-t(g,g)&\le \varepsilon t(g,g) +c_8\varepsilon^{-1}\alpha^{\frac{1}{2}}\Big(\big|g(0)\big|^2+\big|g(\ell)\big|^2\Big).
\label{tpm2}
\end{align}
Using \eqref{sob2} we estimate
\begin{align*}
	\big|g(0)\big|^2+\big|g(\ell)\big|^2&\le  2\delta\int_{\TT\setminus(0,\ell)}\big|g'(s)\big|^2\,ds+
	4\delta^{-1}\int_{\TT\setminus(0,\ell)}\big|g(s)\big|^2\,ds\\
	&\le 2\delta\int_{\TT\setminus(0,\ell)}\big|g'(s)\big|^2\,ds+
	4\delta^{-1}\alpha^{\rho-2}\cdot\alpha^{2-\rho}\int_{\TT\setminus(0,\ell)}\big|g(s)\big|^2\,ds\le c_9(\delta+\delta^{-1}\alpha^{\rho-2})t(g,g),
\end{align*}
and by \eqref{tpm2} we obtain
\[
t'(Jg,Jg)-t(g,g)\le c_{10}(\varepsilon +\varepsilon^{-1}\alpha^{\frac{1}{2}}\delta +
\varepsilon^{-1}\alpha^{\frac{1}{2}}\delta^{-1}\alpha^{\rho-2})\big(t(g,g)+\|g\|^2_{L^2(\TT)}\big).
\]
We optimize the right-hand side by choosing
$\varepsilon=\varepsilon^{-1}\alpha^{\frac{1}{2}}\delta=\varepsilon^{-1}\alpha^{\frac{1}{2}}\delta^{-1}\alpha^{\rho-2}$,
i.e. $\varepsilon:=\alpha^{\frac{\rho-1}{4}},\quad
\delta:=\alpha^{\frac{\rho-2}{2}}$,
which yields
\begin{equation}
	\label{est2}
	t'(Jg,Jg)-t(g,g)\le c_{11} \alpha^{\frac{\rho-1}{4}}\Big( t(g,g)+\|g\|^2_{L^2(\TT)}\Big).
\end{equation}	
The estimate \eqref{est1} with the chosen value $q=\frac{1}{2}$ and \eqref{est2} show that we are
in the situation of Proposition \ref{Comp} with
\[
\delta_1:=c_4\alpha^{\frac{\rho -3}{4}},\quad
\delta_2:=c_{11}\alpha^{\frac{\rho -1}{4}}.
\]
Let $n\in \NN$ be fixed. From now on assume $\rho\in(0,\tfrac{1}{3})$, then by Lemma \ref{apriori} we have
\begin{equation}
	\label{lokal22}
\delta_1 \big(E_n(T)+1\big)=\cO(\alpha^{\frac{\rho -3}{4}})\cO(\alpha^{\frac{2}{3}})=\cO(\alpha^{\frac{3\rho-1}{12}})=o(1),
\end{equation}
and the assumption \eqref{condn} of Proposition \ref{Comp} is satisfied (for all sufficiently large $\alpha$). Using the definitions \eqref{ttaa} of $T$ and $T'$ we obtain the inequality
$E_n(\Lambda'_\alpha)\le E_n(\Lambda_{\alpha,\rho})+R_\alpha$ with
\begin{align*}
R_\alpha&:=\frac{\big(\delta_1 E_n(T)+\delta_2\big)\big( E_n(T)+ 1\big)}{1-\delta_1\big( E_n(T)+ 1\big)}=\cO\Big( \delta_1 \big(E_n(\Lambda_{\alpha,\rho})+a\big)+\delta_2\big)\big( E_n(\Lambda_{\alpha,\rho})+a+ 1\big)\Big)
\end{align*}

If $E_n(\Lambda'_\alpha)=\cO(1)$, then also $E_n(\Lambda_{\alpha,\rho})=\cO(1)$ due to \eqref{lla}, so
\[
\cO\Big( \delta_1 \big(E_n(\Lambda_{\alpha,\rho})+a\big)+\delta_2\big)=\cO(\delta_1+\delta_2)=\cO(\alpha^{\frac{\rho -3}{4}}+\alpha^{\frac{\rho- 1}{4}})=\cO(\alpha^{\frac{\rho- 1}{4}}),
\]
and finally $R_\alpha=\cO(\alpha^{\frac{\rho -1}{4}})$.

Let $E_n(\Lambda'_\alpha)\to+\infty$. Due to \eqref{lla} we have $E_n(\Lambda_{\alpha,\rho})=\cO\big(E_n(\Lambda'_\alpha)\big)$
and then $E_n(\Lambda_{\alpha,\rho}+a+ 1\big)=\cO\big( E_n(\Lambda_{\alpha,\rho}\big)=\cO\big(E_n(\Lambda'_\alpha)\big)$.
The substitition of these estimates and \eqref{lokal22} into the expression for $R_\alpha$ gives the result.
\end{proof}

\begin{theorem}[Effective operator]\label{thmeff}
Let $n\in\NN$ be fixed and $\vartheta\in(\frac{1}{6},\frac{1}{4})$.
\begin{enumerate}
	\item[(i)] If $E_n(\Lambda'_\alpha)=\cO(1)$ for $\alpha\to+\infty$, then
	$E_n(Q_\alpha)=-\alpha-k_*\alpha +E_n(\Lambda'_\alpha)+\cO(\alpha^{-\vartheta})$ for $\alpha\to+\infty$.
	\item[(ii)] If $E_n(\Lambda'_\alpha)\to+\infty$  for $\alpha\to+\infty$, then for $\alpha\to+\infty$ one has
	\[
	E_n(Q_\alpha)=-\alpha^2-k_*\alpha+E_n(\Lambda'_\alpha)+\cO\Big( \alpha^{-\vartheta}\big(
	\alpha^{-\frac{1}{2}}E_n(\Lambda'_\alpha)+1\big)E_n(\Lambda'_\alpha)\Big).
	\]
\end{enumerate}	
\end{theorem}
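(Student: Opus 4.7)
The plan is to combine the two-sided bound of Corollary \ref{cor} with the comparison between $E_n(\Lambda_{\alpha,\rho})$ and $E_n(\Lambda'_\alpha)$ provided by Lemma \ref{lem11}, and then to optimize the free parameters $\rho,\sigma,\tau$ as functions of $\vartheta$. The upper bound is the easy half: Corollary \ref{cor} gives $E_n(Q_\alpha)+\alpha^2+k_*\alpha\le(1+b\alpha^{-\sigma})E_n(\Lambda'_\alpha)+b'\alpha^{-\sigma}$ for any $\sigma\in(0,1)$, so picking any $\sigma\in(\vartheta,1)$ makes the two error terms fit inside the target envelope in both cases.

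For the lower bound I combine Corollary \ref{cor} and Lemma \ref{lem11} into
\[
E_n(Q_\alpha)+\alpha^2+k_*\alpha\ge(1-c'\alpha^{-\tau})\bigl(E_n(\Lambda'_\alpha)+R_\alpha\bigr)-c\alpha^{-\tau},
\]
where $R_\alpha$ involves powers of $\alpha$ with exponents $-\tfrac{1-\rho}{4}$ and $-\tfrac{3-\rho}{4}$. The crucial observation is that setting $\rho:=1-4\vartheta$ forces $\tfrac{1-\rho}{4}=\vartheta$ and $\tfrac{3-\rho}{4}=\tfrac12+\vartheta$, so that $R_\alpha$ assumes exactly the shape $\cO(\alpha^{-\vartheta})$ in case (i) and $\cO\bigl(\alpha^{-\vartheta}(\alpha^{-1/2}E_n(\Lambda'_\alpha)+1)E_n(\Lambda'_\alpha)\bigr)$ in case (ii), matching the target remainder exactly. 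The hypothesis $\vartheta\in(\tfrac{1}{6},\tfrac{1}{4})$ is precisely what makes this choice admissible, by ensuring $\rho\in(0,\tfrac{1}{3})$ so that Lemma \ref{lem11} applies. I then pick any $\tau\in(\vartheta,\tfrac{1}{3})$, possible because $\vartheta<\tfrac14<\tfrac13$, which makes the cross-terms $\alpha^{-\tau}E_n(\Lambda'_\alpha)$, $\alpha^{-\tau}R_\alpha$ and $\alpha^{-\tau}$ dominated by the target in each case (using Lemma \ref{apriori} in (ii) to absorb $\alpha^{-\tau}R_\alpha$ into a smaller order than $R_\alpha$). Combining the two one-sided bounds then yields both claims.

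The subtle point, and the reason why the theorem is stated for $\vartheta\in(\tfrac16,\tfrac14)$ and not a larger range, is the constraint $\rho<\tfrac13$ coming from Lemma \ref{lem11}: this forces the best achievable exponent $\tfrac{1-\rho}{4}$ to stay strictly below $\tfrac14$, while $\rho>0$ forces it above $\tfrac16$. Apart from this calibration of $\rho$, the argument is a matter of bookkeeping, matching the several error contributions arising from the two one-sided bounds against the two target envelopes in cases (i) and (ii).
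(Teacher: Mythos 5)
Your proposal is correct and follows essentially the same route as the paper: both combine the two-sided bound of Corollary \ref{cor} with Lemma \ref{lem11}, calibrate $\rho=1-4\vartheta$ so that $\frac{1-\rho}{4}=\vartheta$ and $\frac{3-\rho}{4}=\frac12+\vartheta$ (with $\vartheta\in(\frac16,\frac14)$ exactly matching $\rho\in(0,\frac13)$), and then take $\sigma\in(\vartheta,1)$, $\tau\in(\vartheta,\frac13)$ to absorb the remaining error terms, using Lemma \ref{apriori} for the cross-terms in case (ii).
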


\begin{proof}
(i) The substitution of the result of Lemma \ref{lem11} into Corollary \ref{cor} shows that for any $\rho\in(0,\frac{1}{3})$ and $\sigma,\tau\in(0,\frac{1}{3})$ one has
	$E_n(Q_\alpha)=-\alpha^2 -\alpha k_*=E_n(\Lambda'_\alpha) + \cO(\alpha^{-\sigma}+\alpha^{-\frac{1-\rho}{4}}+\alpha^{-\tau})$.
	Denoting $\vartheta:=\frac{1-\rho}{4}\in (\frac{1}{6},\frac{1}{4})$ and taking any
	 $\sigma\in(\vartheta,1)$ and $\tau\in(\vartheta,\frac{1}{3})$ we arrive at the sought conclusion.
	
(ii) One proceeds in the same way: The result of Lemma \ref{lem11} is substituted into Corollary \ref{cor}, which
gives, with $\vartheta:=\frac{1-\rho}{4}\in (\frac{1}{6},\frac{1}{4})$ and with any $\sigma\in(0,1)$ and $\tau\in(0,\frac{1}{3})$,
\begin{align*}
E_n(Q_\alpha)&=-\alpha^2-k_*\alpha=E_n(\Lambda'_\alpha)
+
\cO\Big( \big[\alpha^{-\vartheta}\big(
\alpha^{-\frac{1}{2}}E_n(\Lambda'_\alpha)+1\big) +\alpha^{-\tau}+\alpha^{-\sigma}\big]E_n(\Lambda'_\alpha)+\alpha^{-\tau}+\alpha^{-\sigma}\Big).
\end{align*}
and we obtain the sought result by taking any $\sigma\in(\vartheta,1)$ and $\tau\in(\vartheta,\frac{1}{3})$.
\end{proof}

\section{Main results}\label{sec5}

Now we apply Theorem \ref{thmeff} to several specific situations. The first one is very straightforward:
\begin{theorem}[Constant curvature]\label{kconst}
Assume that the curvature	$k$ is constant on $(0,\ell)$, i.e. $k\equiv k_*$, then for any fixed $n\in\NN$ and $\vartheta\in(\frac{1}{6},\frac{1}{4})$ there holds
\[
E_n(Q_\alpha)=-\alpha^2-k_*\alpha-\tfrac{k_*^2}{2}+\tfrac{\pi^2}{n^2\ell^2}+\cO(\alpha^{-\vartheta}) \text{ for }\alpha\to +\infty.
\]
\end{theorem}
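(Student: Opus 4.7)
The plan is to apply part~(i) of Theorem~\ref{thmeff} directly: in this degenerate case the effective operator $\Lambda'_\alpha$ defined by \eqref{ll1} becomes $\alpha$-independent and explicitly solvable, so its $n$-th eigenvalue is a constant that we can read off from the Dirichlet spectrum of a free Laplacian on $(0,\ell)$.

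First I would simplify the potential of $\Lambda'_\alpha$ under the assumption $k\equiv k_*$ on $(0,\ell)$. The coefficient $k_*-k$ vanishes identically, killing the $\alpha$-dependent term, while the lower-order term evaluates to
\[
\frac{k_*^2-2kk_*-k^2}{4}\bigg|_{k=k_*}=\frac{k_*^2-2k_*^2-k_*^2}{4}=-\frac{k_*^2}{2}.
\]
Hence $\Lambda'_\alpha$ reduces to $-\partial_s^2-\frac{k_*^2}{2}$ acting in $L^2(0,\ell)$ with Dirichlet boundary conditions at $s=0$ and $s=\ell$.

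Next, since the Dirichlet eigenvalues of $-\partial_s^2$ on $(0,\ell)$ are $\pi^2 n^2/\ell^2$ and an additive constant in the potential shifts the whole spectrum by that constant, we obtain
\[
E_n(\Lambda'_\alpha)=\frac{\pi^2 n^2}{\ell^2}-\frac{k_*^2}{2}\quad\text{for every }n\in\NN.
\]
In particular $E_n(\Lambda'_\alpha)=\cO(1)$ as $\alpha\to+\infty$, so the hypothesis of part~(i) of Theorem~\ref{thmeff} is satisfied. Substituting this explicit value into the conclusion of that theorem gives, for any $\vartheta\in(\tfrac{1}{6},\tfrac{1}{4})$,
\[
E_n(Q_\alpha)=-\alpha^2-k_*\alpha+E_n(\Lambda'_\alpha)+\cO(\alpha^{-\vartheta})
=-\alpha^2-k_*\alpha-\frac{k_*^2}{2}+\frac{\pi^2 n^2}{\ell^2}+\cO(\alpha^{-\vartheta}),
\]
which is the claim.

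There is no genuine obstacle here: the entire analytic difficulty has already been absorbed into Theorem~\ref{thmeff}, and what remains is an algebraic simplification of the effective potential together with the elementary spectral theory of a Dirichlet Laplacian on an interval. The only thing to double-check is that the remainder exponent range $\vartheta\in(\tfrac{1}{6},\tfrac{1}{4})$ inherited from Theorem~\ref{thmeff} is preserved unchanged, which it is because $E_n(\Lambda'_\alpha)$ is bounded and no additional $\alpha$-dependent factor is introduced in this step.
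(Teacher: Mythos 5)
Your proof is correct and coincides with the paper's own argument: both reduce $\Lambda'_\alpha$ to $D-\tfrac{k_*^2}{2}$ with $D$ the Dirichlet Laplacian on $(0,\ell)$, read off $E_n(\Lambda'_\alpha)=\tfrac{\pi^2 n^2}{\ell^2}-\tfrac{k_*^2}{2}$, and invoke Theorem~\ref{thmeff}(i). (Your derived constant $\tfrac{\pi^2 n^2}{\ell^2}$ is the correct one; the statement's $\tfrac{\pi^2}{n^2\ell^2}$ is a typo in the paper.)
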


\begin{proof}
	For $k\equiv k_*$ on $(0,\ell)$ the operator $\Lambda'_\alpha$ is independent of $\alpha$: there holds $\Lambda'_\alpha=D-\frac{k_*^2}{2}$ with $D$:=the Dirichlet Laplacian on $(0,\ell)$,
	so we are in the situation of Theorem~\ref{thmeff}, while
	\[
	E_n(\Lambda'_\alpha)=E_n(D)-\dfrac{k_*^2}{2}=\dfrac{\pi^2n^2}{\ell^2}-\dfrac{k_*^2}{2}.\qedhere
	\]
\end{proof}

%

Another case which can be directly deduced from the existing results is as follows:
\begin{theorem}[Maximum curvature attained inside $\Gamma$]\label{kmax0}
	Let $m\in 2\NN$ and the boundary $\partial \Omega$ be $C^{m+3}$-smooth.
	Assume that $k$ assumes its strict maximum on $[0,\ell]$ at some point $s_*\in(0,\ell)$ such that 	$k^{(m)}(s_*)<0$ and $k^{(j)}(s_*)=0$ for all $j\in\{1,\dots,m-1\}$. Then for and fixed $n\in\NN$ and $\alpha\to+\infty$ there holds
	\[
	E_n(Q_\alpha)=-\alpha^2-k_*\alpha+ \Big(-\tfrac{k^{(m)}(0)}{m!}\Big)^\frac{2}{m+2} E_n(Z_{m})\alpha^{\frac{2}{m+2}}+\cO(\alpha^{\frac{1}{m+2}+\varepsilon}),
	\]
	with any $\varepsilon>0$ for $m=2$ and $\varepsilon=0$ for $m\ge 4$, 	where $Z_m$ is the Schrödinger operator in $L^2(\RR)$ given by
	\[
	(Z_m f)(t)=-f''(t)+t^m f(t).
	\]
	In particular, for $m=2$ one has for any $\varepsilon>0$
	\begin{equation}
		\label{mm2a}
		E_n(Q_\alpha)=-\alpha^2-k_*\alpha +(2n-1)\sqrt{-\tfrac{k''(0)}{2}} \cdot \sqrt{\alpha}+\cO(\alpha^{\frac{1}{4}+\varepsilon}).
	\end{equation}
\end{theorem}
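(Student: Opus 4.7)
The strategy is to invoke Theorem~\ref{thmeff} and reduce everything to a one-dimensional semiclassical analysis of the effective operator $\Lambda'_\alpha$. Since Lemma~\ref{apriori} together with a lower test-function bound will show that $E_n(\Lambda'_\alpha)$ grows like $\alpha^{2/(m+2)}\to+\infty$, we are in case (ii) of Theorem~\ref{thmeff}, and the task becomes to prove
\begin{equation}\label{eff-exp}
E_n(\Lambda'_\alpha)=\Big(-\tfrac{k^{(m)}(s_*)}{m!}\Big)^{\!\frac{2}{m+2}}E_n(Z_m)\,\alpha^{\frac{2}{m+2}}+\cO(\alpha^{\frac{1}{m+2}}).
\end{equation}

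To handle $\Lambda'_\alpha$ itself, I would apply a standard harmonic-type (in fact anharmonic, for $m\ge 4$) approximation. Since $s_*\in (0,\ell)$ is an interior strict maximum of $k$, a Taylor expansion of order $m+1$ gives, uniformly on a small neighbourhood of $s_*$,
\[
k_*-k(s)=c\,(s-s_*)^m+\cO\big(|s-s_*|^{m+1}\big),\qquad c:=-\tfrac{k^{(m)}(s_*)}{m!}>0,
\]
the remainder being genuinely present because the boundary is only $C^{m+3}$, hence $k$ is $C^{m+1}$. Introduce the unitary rescaling $s=s_*+(c\alpha)^{-1/(m+2)}\,t$; then the leading piece $-\partial_s^2+c\alpha(s-s_*)^m$ transforms into $(c\alpha)^{2/(m+2)}(-\partial_t^2+t^m)=(c\alpha)^{2/(m+2)}Z_m$. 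The Taylor remainder contributes the perturbation $\cO(\alpha^{1/(m+2)})\,t^{m+1}$, and the bounded term $\frac{k_*^2-2kk_*-k^2}{4}$ is an $\cO(1)$ perturbation, so after dividing by $\beta^2=(c\alpha)^{2/(m+2)}$ these are genuinely small relative perturbations of $Z_m$. Localisation on an interval of width $\alpha^{-1/(m+2)+\eta}$ around $s_*$ (say, via an IMS partition of unity) shows that the Dirichlet conditions at $0,\ell$ and the truncation of the Taylor expansion together induce only exponentially small errors on the eigenvalues, because the Agmon weight $e^{\rho\,|t|^{m/2+1}}$ shows that eigenfunctions of $Z_m$ (with the small perturbation) are localised on scales of order $1$ in $t$, which is of order $\alpha^{-1/(m+2)}$ in the $s$-variable. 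Combining an upper bound (inserting truncated eigenfunctions of $Z_m$ as test functions in the min--max for $\Lambda'_\alpha$) with a lower bound (IMS plus the min--max for the rescaled perturbed $Z_m$) yields \eqref{eff-exp}.

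It remains to combine \eqref{eff-exp} with the remainder of Theorem~\ref{thmeff}(ii). With $E:=E_n(\Lambda'_\alpha)=\cO(\alpha^{2/(m+2)})$, this remainder is
\[
\cO\!\Big(\alpha^{-\vartheta-\frac12}E^2+\alpha^{-\vartheta}E\Big)=\cO\!\Big(\alpha^{\frac{4}{m+2}-\frac12-\vartheta}+\alpha^{\frac{2}{m+2}-\vartheta}\Big)\quad\text{for any }\vartheta\in\big(\tfrac16,\tfrac14\big).
\]
For $m\ge 4$ we have $\frac{4}{m+2}-\frac12\le\frac16$, so choosing $\vartheta$ sufficiently close to $\frac14$ makes the first term go below $\alpha^{1/(m+2)}$ and, because $\frac{2}{m+2}-\vartheta<\frac{1}{m+2}$ for $\vartheta>\frac{1}{m+2}$ (permissible since $\frac{1}{m+2}\le\frac16<\vartheta$), the second term is also dominated by $\alpha^{1/(m+2)}$; combining with the Taylor error $\cO(\alpha^{1/(m+2)})$ from \eqref{eff-exp} gives the remainder $\cO(\alpha^{1/(m+2)})$. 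For $m=2$, however, $\frac{4}{m+2}-\frac12=\frac12$, so the error from Theorem~\ref{thmeff} is $\cO(\alpha^{1/2-\vartheta})$ and, since $\vartheta<\frac14$ is forced, the best we obtain is $\cO(\alpha^{1/4+\varepsilon})$ for any $\varepsilon>0$. The specialisation \eqref{mm2a} then follows from $E_n(Z_2)=2n-1$ for the harmonic oscillator. The main (minor) obstacle is a careful bookkeeping of the scales in the last step, to see that the $\vartheta$-restriction in Theorem~\ref{thmeff} is what forces the $\alpha^\varepsilon$ loss precisely when $m=2$ and not for $m\ge 4$.
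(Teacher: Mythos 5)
Your proposal is correct and follows essentially the same route as the paper: reduce to Theorem~\ref{thmeff}(ii), establish $E_n(\Lambda'_\alpha)=\big(-\tfrac{k^{(m)}(s_*)}{m!}\big)^{2/(m+2)}E_n(Z_m)\alpha^{2/(m+2)}+\cO(\alpha^{1/(m+2)})$ for the interior well, and then do exactly the bookkeeping of the $\vartheta$-dependent remainder that forces the $\alpha^{\varepsilon}$ loss for $m=2$ but not for $m\ge 4$. The only difference is that the paper simply cites the classical semiclassical result of Martinez--Rouleux for the expansion of $E_n(D+\alpha U)$ at an interior degenerate minimum, whereas you sketch its proof (Taylor expansion, rescaling, IMS, Agmon decay) from scratch; both are fine.
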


\begin{proof}
Denote $U:=k_*-k$, and let $D$ be the Dirichlet Laplacian in $L^2(0,\ell)$. The analysis of $D+\alpha U$ is covered by the classical results of semiclassical analysis \cite{hbook,hs1,mr}, in particular, for any fixed $n\in\NN$ we have
\[
E_n(D+\alpha U)=\Big(\dfrac{U^{(m)}(0)}{m!}\Big)^\frac{2}{m+2} E_n(Z_{m})\alpha^{\frac{2}{m+2}}+\cO(\alpha^{\frac{1}{m+2}}),
\]
see \cite[Theorem 2.1]{mr}, and then $E_n(\Lambda'_\alpha)=E_n(D+\alpha U)+\cO(1)$. The substitution into Theorem \ref{thmeff}(ii) gives
\[
E_n(Q_\alpha)=-\alpha^2-k_*\alpha +\Big(\dfrac{U^{(m)}(0)}{m!}\Big)^\frac{2}{m+2} E_n(Z_{m})\alpha^{\frac{2}{m+2}}
+\cO(\alpha^{\frac{1}{m+2}}+\alpha^{\frac{2}{m+2}-\vartheta})
\]
with any $\vartheta\in(\frac{1}{6},\frac{1}{4})$, which gives the claim (remark that for $m\ge 4$ it is possible to choose $\vartheta\ge\frac{1}{m+2}$).
The formula \eqref{mm2a} follows by the using the well-known formulas for the eigenvalues of $Z_2$ (one-dimensional harmonic oscillator),
	$E_n(Z_2)=2n-1$.
\end{proof}

In order to cover several cases of variable curvature with a minimum attained at an end point of $\Gamma$, for $m\in\NN$ and $\beta>0$ we denote by
$S_{m,\beta}$ the Schr\"odinger operator in $L^2(0,+\infty)$ given by
\begin{equation}
	\label{smb}
(S_{m,\beta} f)(t)=-f''(t)+\beta t^m f(t)
\end{equation}
with Dirichlet boundary condition. It is standard to see that $S_{m,\beta}$ has compact resolvent and all its eigenvalues are simple.
In addition, the homogeneity of the potential implies
\begin{equation}
	\label{scale}
	E_n(S_{m,\beta})=\beta^{\frac{2}{2+m}}E_n(S_{m,1}) \text{ for all $m,n\in\NN$ and $\beta>0$.}
\end{equation}

The following result is a straightforward adaptation of the known results of semiclassical results to the case of a potential attaining its minimum at a boundary point.
\begin{lemma}\label{semikl}
	Let $a>0$ and $m\in\NN$. Let $U\in C^{m+1}([0,a])$ with $U(0)=0$ such that
	\begin{itemize}
		\item[(i)] the point $0$ is a strict minimum of $U$ on $[0,a]$, i.e. $U(t)>0$ for all $t\in(0,a]$,
		\item[(ii)] $U^{(m)}(0)> 0$ and $U^{(j)}(0)=0$ for all $j\in\{1,\dots,m-1\}$.
	\end{itemize}
	Let $D$ be the Dirichlet Laplacian in $L^2(0,a)$, then for $\alpha\to+\infty$ one has
	\[
	E_n(D+\alpha U)=\Big(\dfrac{U^{(m)}(0)}{m!}\Big)^{\frac{2}{2+m}} E_n(S_{m,1})\,\alpha^{\frac{2}{2+m}}+\cO(\alpha^{\frac{2}{3+m}}).
	\]	
\end{lemma}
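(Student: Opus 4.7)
The plan is to carry out a semiclassical rescaling tuned to the $m$-th order vanishing of $U$ at $t=0$, and then bracket the operator between a model half-line operator and its lower/upper Taylor approximants. Set $\beta := U^{(m)}(0)/m! > 0$ and $\epsilon := \alpha^{-1/(m+2)}$. Taylor's theorem supplies $C > 0$ and $\delta_0 \in (0, a)$ with $|U(t) - \beta t^m| \le C t^{m+1}$ on $[0, \delta_0]$, while assumption (i) yields $U \ge c(\delta) > 0$ on $[\delta, a]$ for each $\delta \in (0, a)$. The change of variable $t = \epsilon s$ turns $\epsilon^2 (D + \alpha U)$ into the Dirichlet operator $-\partial_s^2 + \epsilon^{-m} U(\epsilon s)$ on $L^2(0, a\epsilon^{-1})$, and the rewriting $\epsilon^{-m} U(\epsilon s) = \beta s^m + \epsilon s^{m+1} R(\epsilon s)$ with $R$ bounded displays the rescaled operator as a perturbation of the model $S_{m,\beta}$. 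Since $\beta s^m \to +\infty$, the eigenfunctions $\phi_j$ of $S_{m,\beta}$ on $(0, +\infty)$ decay faster than any exponential, which makes all tail contributions negligible.

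For the upper bound I would apply the min-max principle to the trial functions $v_j(t) := \chi(t/\delta) \epsilon^{-1/2} \phi_j(t/\epsilon)$ for $j = 1, \dots, n$, with $\chi \in C^\infty([0, +\infty))$ equal to $1$ on $[0, 1/2]$ and vanishing on $[1, +\infty)$ and $\delta \in (0, \delta_0)$ fixed. The cutoff contribution is $\cO(\alpha^{-N})$ for every $N$ by the rapid decay of $\phi_j$, and the Taylor upper bound contributes $\alpha\, \epsilon^{m+1} \langle s^{m+1}\phi_j, \phi_j\rangle = \cO(\alpha^{1/(m+2)})$ to the Rayleigh quotient, which is absorbed into $\cO(\alpha^{2/(m+3)})$. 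For the lower bound I would employ an IMS partition of unity $\chi_0^2 + \chi_1^2 = 1$ with $\supp \chi_0 \subset [0, \delta]$ and $\supp \chi_1 \subset [\delta/2, a]$, giving $D + \alpha U \ge \chi_0 (D + \alpha U)\chi_0 + \chi_1 (D + \alpha U)\chi_1 - C_\delta$. The $\chi_1$-piece is bounded below by roughly $c(\delta/2)\,\alpha/2$, which dominates the target order $\alpha^{2/(m+2)}$; on $\supp \chi_0$ the Taylor lower bound and a Dirichlet extension to $(0, +\infty)$ reduce the analysis to $-\partial_t^2 + \alpha \beta t^m - C\alpha t^{m+1}$, which after the rescaling is $S_{m,\beta}$ perturbed by $-C\epsilon s^{m+1}$. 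Since $s$ is confined to $[0, \delta\epsilon^{-1}]$ and $C\epsilon s^{m+1} \le C\delta s^m \le (\beta/2) s^m$ for $\delta$ small enough, this perturbation has relative form-bound strictly below $1$, and first-order perturbation theory gives the matching eigenvalue shift of order $\alpha^{1/(m+2)}$.

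The main obstacle is controlling the lower bound uniformly in $n$ with the claimed remainder, given that the rescaled problem lives on the long but finite interval $[0, a\epsilon^{-1}]$ rather than on the half-line. Two technical points require care. First, one has to quantify the superexponential decay of $\phi_j$ precisely enough that forcing Dirichlet conditions at $s = a\epsilon^{-1}$ perturbs the eigenvalues only by an exponentially small amount; this can be obtained by standard Agmon estimates for $S_{m,\beta}$, whose potential grows at infinity. Second, one has to handle the perturbation $-C\epsilon s^{m+1}$, which is \emph{not} relatively bounded with respect to $S_{m,\beta}$ on the whole half-line, so that the IMS cutoff truncating $s$ at $\delta\epsilon^{-1}$ is indispensable in order to confine the perturbation to the range where the leading $\beta s^m$ dominates and first-order perturbation theory can be applied.
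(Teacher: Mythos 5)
Your overall architecture --- Taylor expansion of $U$ at $0$, an IMS-type localization separating a neighbourhood of $0$ from the rest of $(0,a)$, comparison with the half-line model $S_{m,\beta}$, and the scaling relation --- is essentially the paper's, and your upper bound is fine (it even gives the sharper error $\cO(\alpha^{1/(m+2)})$, which is indeed absorbed into $\cO(\alpha^{2/(m+3)})$). The genuine gap is in the lower bound, precisely at the step you call ``first-order perturbation theory.'' After rescaling, the perturbation on $[0,\delta\epsilon^{-1}]$ is $-C\epsilon s^{m+1}$ with $\sup_{s\le\delta\epsilon^{-1}}C\epsilon s^{m+1}=C\delta^{m+1}\epsilon^{-m}\to+\infty$, so it is not small in any norm; the only bound you actually establish is $C\epsilon s^{m+1}\le C\delta\,s^m$, and absorbing this into the leading potential yields $E_n\ge E_n\big(S_{m,\beta-C\delta}\big)=(1-C\delta/\beta)^{2/(m+2)}E_n(S_{m,\beta})$, i.e.\ an error of order $\delta\,\alpha^{2/(m+2)}$. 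With $\delta$ \emph{fixed} this is not even $o(\alpha^{2/(m+2)})$, so it falls short of the claimed remainder. Nor can first-order perturbation theory be invoked as a black box: the perturbation is not $\epsilon$ times a fixed relatively bounded operator (it depends on $\epsilon$ through $R(\epsilon s)$ and through the $\epsilon$-dependent Dirichlet truncation), and turning the formal first-order shift $\epsilon\langle s^{m+1}\phi_n,\phi_n\rangle$ into a rigorous \emph{lower} bound would require uniform-in-$\epsilon$ Agmon decay for the eigenfunctions of the \emph{perturbed, truncated} operator --- your proposal only invokes decay for the unperturbed $S_{m,\beta}$, and only to justify the Dirichlet truncation.

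There are two ways to close the gap. The paper's way is to let the localization window shrink: localize to $(0,2\alpha^{-q})$, so that the Taylor error is controlled in sup norm by $\alpha\sup_{t\le 2\alpha^{-q}}|U(t)-\beta t^m|=\cO(\alpha^{1-(m+1)q})$ while the IMS cutoff costs $\cO(\alpha^{2q})$; balancing these at $q=1/(m+3)$ produces exactly the remainder $\cO(\alpha^{2/(m+3)})$ of the statement (this balancing is the reason the lemma's exponent is $2/(m+3)$ rather than the ``first-order'' exponent $1/(m+2)$). Alternatively, you could keep $\delta$ fixed and prove uniform superexponential localization for the low-lying eigenfunctions of the perturbed rescaled operator, which would give $\langle u,s^{m+1}u\rangle=\cO(1)$ on the relevant spectral subspace and would in fact yield the sharper remainder $\cO(\alpha^{1/(m+2)})$; but that requires an additional Agmon-type argument that the proposal does not supply.
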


\begin{proof} For an interval $I\subset\RR$ it will be convenient to denote $D_I$:=the Dirichlet Laplacian in $L^2(I)$,
in particular $D=D_{(0,a)}$. Further define the constants
\[
M:=\tfrac{U^{(m)}(0)}{m!}>0,\quad N:=\big\|\tfrac{U^{(m+1)}}{(m+1)!}\big\|_\infty,
\]
and the function
\[
U_0:\ t\mapsto Mt^m.
\]
Let $\varepsilon\in (0,\frac{M}{2}\big)$, then there exists $\delta>0$ with $\delta N<1$ such that
\begin{equation}
	\label{uu00}
\big|U(t)-U_0(t)\big|\le Nt^{m+1} \text{ for all }t\in(0,\delta).
\end{equation}
In particular, one can choose some $\delta_0\in (0,a)$ such that
$\tfrac{M t^m}{2}\le U(t)\le\tfrac{3M t^m}{2}$ for all $t\in(0,\delta_0)$.
Let $c:=\min\limits_{t\in[\delta_0,a]} U(t)$, then $c>0$ by (i), and
\begin{equation}
	 \label{utl0}
U(t)\ge \min\Big\{\tfrac{M t^m}{2},c\Big\} \text{ for all }t\in(0,a).
\end{equation}

Let $n\in\NN$ be fixed. Let $q>0$ (the precise value will be chosen later). The min-max principle gives the upper bound
\begin{equation}
	\label{upb}
	E_n(D+\alpha U)\le E_n(D_{(0,2\alpha^{-q})}+\alpha U).
\end{equation}
For a lower bound we pick two $C^\infty$-smooth functions $\chi_1,\chi_2:\RR\to \RR$  with
$\chi_1^2+\chi_2^2=1$ such that $\chi_1(t)=1$ for all $t\le 1$ and $\chi_2(t)=1$ for all $t\ge 2$,
and define $\chi_{j,\alpha}:=\chi_j(\alpha^q\,\cdot\,)$. For any $f\in H^1_0(0,a)$ one has
the obvious relation $\|f\|^2_{L^2(0,a)}=\|\chi_{1,\alpha}f\|^2_{L^2(0,2\alpha^{-q})}	+ \|\chi_{2,\alpha}f\|^2_{L^2(\alpha^{-q},a)}$
and the so-called IMS formula
\begin{align*}
\int_0^a \big(|f'|^2+\alpha U|f|^2+W_\alpha|f|^2\big)dt&=\int_0^{2\alpha^{-q}}
\Big(\big|(\chi_{1,\alpha}f)'\big|^2+\alpha U|\chi_{1,\alpha}f|^2\Big)dt\\
&\quad +\int_{\alpha^{-q}}^a
\Big(\big|(\chi_{2,\alpha}f)'\big|^2+\alpha U|\chi_{2,\alpha}f|^2\Big)dt
\end{align*}
with $W_\alpha:=|\chi_{1,\alpha}'|^2+|\chi_{2,\alpha}'|^2$.
The left-hand side is the sesquilinear form for the operator $D+\alpha U+W_\alpha$ computed on $(f,f)$, while
then the right-hand side is the sesquilinear form for the operator
$(D_{(0,2\alpha^{-q})}+\alpha U)\oplus (D_{(\alpha^{-q},a)}+\alpha U)$ computed on $\big( (\chi_{1,\alpha}f,\chi_{2,\alpha}f),(\chi_{1,\alpha}f,\chi_{2,\alpha}f)\big)$. The min-max principle implies
\begin{equation}
	 \label{uta1}
\begin{aligned}
E_n(D+\alpha U+W_\alpha)&\ge E_n\Big((D_{(0,2\alpha^{-q})}+\alpha U)\oplus (D_{(\alpha^{-q},a)}+\alpha U)\Big)\\
&\ge \min\big\{ E_n(D_{(0,2\alpha^{-q})}+\alpha U), E_1(D_{(\alpha^{-q},a)}+\alpha U)\big\}.
\end{aligned}
\end{equation}
By \eqref{utl0} one has $\alpha U(t)\ge \frac{M}{2}\alpha^{1-mq}$ for all $t\in(\alpha^{-q},a)$,
which yields the lower bound
$E_1(D_{(\alpha^{-q},a)}+\alpha U)\ge \frac{M}{2}\alpha^{1-mq}$.

Now let $p>q$, then $\alpha^{-p}\le 2\alpha^{-q}$. Due to \eqref{utl0} for all $t\in(0,\alpha^{-p})$ we have $U(t)\le \frac{3M}{2}\alpha^{1-pm}$, which shows
\begin{align*}
E_n(D_{(0,2\alpha^{-q})}+\alpha U)&\le E_n(D_{(0,\alpha^{-p})}+\alpha U)\le E_n(D_{(0,\alpha^{-p})})+\frac{3M}{2}\alpha^{1-pm}=\dfrac{\pi^2}{n^2}\alpha^{2p}+\frac{3M}{2}\alpha^{1-pm}.
\end{align*}
From now on assume
\begin{equation}
	\label{qqq}
q\in\big(0,\tfrac{1}{m+2}\big)
\end{equation}
and set $p:=\frac{1}{m+2}$, then $E_n(D_{(0,2\alpha^{-q})}+\alpha U)=\cO (\alpha^{\frac{2}{m+2}})< E_1(D_{(\alpha^{-q},a)}+\alpha U)$,
and from \eqref{uta1} we obtain $E_n(D+\alpha U+W_\alpha)\ge E_n(D_{(0,2\alpha^{-q})}+\alpha U)$. Taking into account
$\|W_\alpha\|_\infty=\cO(\alpha^{2q})$ and \eqref{upb} we arrive at
\begin{equation}
	\label{bbb}
E_n(D+\alpha U)= E_n(D_{(0,2\alpha^{-q})}+\alpha U)+\cO(\alpha^{2q}).
\end{equation}
For all $t\in(0,2\alpha^{-q})$ by \eqref{uu00} one has $\big|U(t)-U_0(t)\big|\le 2^{m+1}N\alpha^{-(m+1)q}$, then
\begin{align*}
E_n(D_{(0,2\alpha^{-q})}+\alpha U)&=E_n(D_{(0,2\alpha^{-q})}+\alpha U+\alpha(U-U_0)\big)=E_n(D_{(0,2\alpha^{-q})}+\alpha U_0)+\cO(\alpha^{1-(m+1)q}),
\end{align*}
and the substitution into \eqref{bbb} gives
\begin{equation}
	\label{part1}
E_n(D+\alpha U)= E_n(D_{(0,2\alpha^{-q})}+\alpha U_0)+\cO(\alpha^{2q}+\alpha^{1-(m+1)q}).
\end{equation}

Now we are going to compare the eigenvalues of $D_{(0,2\alpha^{-q})}+\alpha U_0$ with those of $S_{m, M\alpha}$, see \eqref{smb}. The min-max-principle gives the upper bound
\begin{equation}
	\label{part2a}
	E_n(S_{m,M_\alpha})\le E_n(D_{(0,2\alpha^{-q})}+\alpha U_0).
\end{equation}
With the functions $\chi_{j,\alpha}$ and $W_\alpha$ introduced above we have again the identity
\[
\|f\|^2_{L^2(0,+\infty)}=\|\chi_{1,\alpha}f\|^2_{L^2(0,2\alpha^{-q})}	+ \|\chi_{2,\alpha}f\|^2_{L^2(\alpha^{-q},+\infty)}
\]
and
\begin{align*}
	\int_0^{+\infty} \big(|f'|^2+\alpha U_0|f|^2+W_\alpha|f|^2\big)dt&=\int_0^{2\alpha^{-q}}
	\Big(\big|(\chi_{1,\alpha}f)'\big|^2+\alpha U_0|\chi_{1,\alpha}f|^2\Big)dt\\
	&\quad +\int_{\alpha^{-q}}^{+\infty}
	\Big(\big|(\chi_{2,\alpha}f)'\big|^2+\alpha U_0|\chi_{2,\alpha}f|^2\Big)dt
\end{align*}
for all $f\in H^1_0(0,+\infty)$ such that the left-hand side is finite.
The left-hand side is the sesquilinear form for the operator $S_{m,M\alpha}+W_\alpha$ computed on $(f,f)$
and the right-hand side is the sesquilinear form for $(D_{(0,2\alpha^{-q})}+\alpha U_0)\oplus B_\alpha$ computed
on $\big( (\chi_{1,\alpha}f,\chi_{2,\alpha}f),(\chi_{1,\alpha}f,\chi_{2,\alpha}f)\big)$, where $B_\alpha$ is the operator
acting as $f\mapsto-f''+\alpha U_0 f$ in $L^2(\alpha^{-q},+\infty)$ with Dirichlet boundary condition.
Using $\|W_\alpha\|_\infty=\alpha^{2q}$ and $U_0(t)\ge M\alpha^{-mq}$ for all $t\in(\alpha^{-q},+\infty)$ we obtain
$E_1(B_\alpha)\ge M\alpha^{1-mq}$ and then
\begin{equation}
	\label{low44}
\begin{aligned}
	E_n(S_{m,M\alpha})&+\cO(\alpha^{2q})=E_n(S_{m,M\alpha}+W_\alpha)	\ge E_n\big( (D_{(0,2\alpha^{-q})}+\alpha U_0)\oplus B_\alpha\big)\\
	&\ge
	\min\Big\{ E_n(D_{(0,2\alpha^{-q})}+\alpha U_0), E_1(B_\alpha)\Big\}\ge \min\Big\{ E_n(D_{(0,2\alpha^{-q})}+\alpha U_0), M\alpha^{1-mq}\Big\}.
\end{aligned}
\end{equation}
For any $\delta\in(0,2\alpha^{-q})$ one has $U_0(t)\le 2^mM\delta^m$ for all $t\in(0,\delta)$, therefore,
\begin{align*}
E_n(D_{(0,2\alpha^{-q})}+\alpha U_0)&\le E_n(D_{(0,\delta)}+\alpha U_0)\le E_n(D_{(0,\delta)})+2^mM\alpha \delta^{m}=\dfrac{\pi^2n^2}{\delta^2}+2^mM\alpha\delta^m,
\end{align*}
and for $\delta:=\alpha^{-\frac{1}{m+2}}$ this results in $E_n(D_{(0,2\alpha^{-q})}+\alpha U_0)=\cO(\alpha^\frac{2}{m+2})$.
Using \eqref{qqq} and \eqref{low44} yields $E_n(S_{m,M\alpha})\ge E_n(D_{(0,2\alpha^{-q})}+\alpha U_0)+\cO(\alpha^{2q})$, and by combining with \eqref{part2a} we arrive at the asymptotics
$E_n(D_{(0,2\alpha^{-q})}+\alpha U_0)=E_n(S_{m,M\alpha})+\cO(\alpha^{2q})$.
The substitution into \eqref{part1} with $q:=\frac{2}{m+3}$ gives
$E_n(D+\alpha U)= E_n(S_{m,M\alpha})+\cO(\alpha^{\frac{2}{m+3}})$,
and we conclude the proof by using $E_n(S_{m,M\alpha})\stackrel{\eqref{smb}}{=}(\alpha M)^\frac{2}{2+m}E_n(S_{m,1})$.
\end{proof}

\begin{theorem}[Maximum curvature attained at an endpoint of $\Gamma$]\label{kmax}
	Let $m\in\NN$ and assume that:
	\begin{itemize}
		\item the boundary $\partial\Omega$ is $C^{m+3}$-smooth,
		\item the curvature $k$ has its unique global maximum on $[0,\ell]$ at $0$, i.e.
		$k(s)<k_*\equiv k(0)$ for all $s\in(0,\ell]$,
		\item there holds $k^{(m)}(0)<0$ and $k^{(j)}(0)=0$ for all $j\in\{1,\dots,m-1\}$.
	\end{itemize}	
Then for any fixed $n\in\NN$ and  $\alpha\to +\infty$ one has
\begin{equation}
	\label{eqa}
E_n(Q_\alpha)=-\alpha^2-k_*\alpha+ \Big(-\dfrac{k^{(m)}(0)}{m!}\Big)^\frac{2}{m+2} E_n(S_{m,1})\alpha^{\frac{2}{m+2}}+\cO(\alpha^{q}),
\end{equation}
with any $q>\frac{7}{12}$ for $m=1$ and $q=\frac{2}{m+3}$ for if $m\ge 2$,
and the operator $S_{m,1}$ is defined in \eqref{smb}. In particular,
\begin{itemize}
	\item for $m=1$:
\begin{equation}
	 \label{mm1}
E_n(Q_\alpha)=-\alpha^2-k_*\alpha +a_n\big(-k'(0)\big)^\frac{2}{3} \alpha^{\frac{2}{3}}+\cO(\alpha^{q}),
\end{equation}
where $(-a_n)$	is the $n$-th zero of the Airy function $\mathrm{Ai}$ and $q>\frac{7}{12}$ is arbitrary,
\item for $m=2$:
\begin{equation}
	\label{mm2}
E_n(Q_\alpha)=-\alpha^2-k_*\alpha +(4n-1)\sqrt{-\dfrac{k''(0)}{2}} \cdot \sqrt{\alpha}+\cO(\alpha^{\frac{2}{5}}).
\end{equation}
\end{itemize}	
\end{theorem}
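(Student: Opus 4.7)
The plan is to combine Theorem \ref{thmeff} with the semiclassical analysis of Lemma \ref{semikl}. First I would identify the effective operator: setting $U:=k_*-k$ and $V:=\frac{k_*^2-2kk_*-k^2}{4}$, one has $\Lambda'_\alpha = D + \alpha U + V$, where $D$ is the Dirichlet Laplacian on $(0,\ell)$. The function $V$ is bounded uniformly in $\alpha$, so by the min-max principle $E_n(\Lambda'_\alpha)=E_n(D+\alpha U)+\cO(1)$. The assumptions on $k$ translate directly into the hypotheses of Lemma \ref{semikl} for $U$ on $[0,\ell]$: $U(0)=0$, the point $0$ is a strict minimum of $U$ on $[0,\ell]$ because $k(s)<k_*$ for $s\in(0,\ell]$, the derivatives $U^{(j)}(0)=-k^{(j)}(0)$ vanish for $j=1,\dots,m-1$, and $U^{(m)}(0)=-k^{(m)}(0)>0$. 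Moreover the required $C^{m+1}$-regularity of $U$ on $[0,\ell]$ follows from the $C^{m+3}$-regularity of $\partial\Omega$ (which yields $k\in C^{m+1}$).

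Applying Lemma \ref{semikl} then gives
\[
E_n(D+\alpha U)=\Big(-\dfrac{k^{(m)}(0)}{m!}\Big)^{\frac{2}{m+2}} E_n(S_{m,1})\,\alpha^{\frac{2}{m+2}}+\cO(\alpha^{\frac{2}{m+3}}),
\]
and since $\frac{2}{m+3}>0$ this remainder dominates the $\cO(1)$ from $V$. In particular $E_n(\Lambda'_\alpha)\to+\infty$, so Theorem \ref{thmeff}(ii) applies and yields, for any $\vartheta\in(\frac{1}{6},\frac{1}{4})$,
\[
E_n(Q_\alpha) = -\alpha^2-k_*\alpha+E_n(\Lambda'_\alpha)+\cO\Big(\alpha^{-\vartheta}\big(\alpha^{-\frac{1}{2}}E_n(\Lambda'_\alpha)+1\big)E_n(\Lambda'_\alpha)\Big).
\]
Using $E_n(\Lambda'_\alpha)=\cO(\alpha^{\frac{2}{m+2}})$ the Theorem \ref{thmeff}(ii) remainder reduces to $\cO(\alpha^{\frac{4}{m+2}-\frac{1}{2}-\vartheta}+\alpha^{\frac{2}{m+2}-\vartheta})$, and combining with the $\cO(\alpha^{\frac{2}{m+3}})$ from Lemma \ref{semikl} gives the total remainder.

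The remaining technical step is the balance of exponents. For $m=1$ the dominant contribution is $\alpha^{\frac{4}{3}-\frac{1}{2}-\vartheta}=\alpha^{\frac{5}{6}-\vartheta}$, and picking $\vartheta$ arbitrarily close to $\frac{1}{4}$ yields the exponent $q$ arbitrarily close to (but strictly greater than) $\frac{7}{12}$. For $m\ge 2$ one checks that $\frac{2}{m+3}\ge \max\{\frac{4}{m+2}-\frac{1}{2}-\vartheta,\frac{2}{m+2}-\vartheta\}$ with $\vartheta$ close to $\frac{1}{4}$, so Lemma \ref{semikl}'s remainder dominates, giving $\cO(\alpha^{\frac{2}{m+3}})$. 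This produces \eqref{eqa}.

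Finally I would specialize. For $m=1$, $S_{1,1}$ is the Airy operator on the half line: its eigenvalues are precisely the negatives of the zeros of $\mathrm{Ai}$, so $E_n(S_{1,1})=a_n$, giving \eqref{mm1}. For $m=2$, $S_{2,1}$ is the half-line harmonic oscillator $-f''+t^2 f$ with Dirichlet condition at $0$; its eigenfunctions are the odd Hermite functions, and the corresponding eigenvalues are the odd-index eigenvalues of the full oscillator, namely $E_n(S_{2,1})=4n-1$. Substituting into \eqref{eqa} and using the scaling \eqref{scale} yields \eqref{mm2}. I do not expect any major obstacle here; the main care is in tracking the exponents and verifying that the hypotheses of Lemma \ref{semikl} really reduce to the stated assumptions on $k$.
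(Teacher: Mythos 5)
Your proposal is correct and follows essentially the same route as the paper: write $\Lambda'_\alpha=D+\alpha U+V$ with $U=k_*-k$, apply Lemma \ref{semikl} (noting $C^{m+3}$ boundary gives $k\in C^{m+1}$ and $U^{(m)}(0)=-k^{(m)}(0)>0$), substitute into Theorem \ref{thmeff}(ii), and balance the exponents, with the same identifications $E_n(S_{1,1})=a_n$ and $E_n(S_{2,1})=4n-1$. Your exponent bookkeeping (using $E_n(\Lambda'_\alpha)=\cO(\alpha^{2/(m+2)})$ inside the remainder of Theorem \ref{thmeff}(ii)) reproduces the paper's final remainders $q>\frac{7}{12}$ for $m=1$ and $\cO(\alpha^{2/(m+3)})$ for $m\ge 2$, so no further comment is needed.
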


\begin{proof}
Let $n\in\NN$ be fixed. We denote again
$U:=k_*-k$,  $V:=\frac{k_*^2-2kk_*-k^2}{4}$,
then $\Lambda'_\alpha=D+\alpha U+V$, where $D$	is the Dirichlet Laplacian in $L^2(0,\ell)$,  and note that $V$ is bounded and independent of $\alpha$. The function $U$ satisfies the assumptions of Lemma \ref{semikl} on $(0,\ell)$, with $U^{(m)}(0)=-k^{(m)}(0)$, so we obtain
\begin{align*}
E_n(\Lambda'_\alpha)&=E_n(D+\alpha U)+\cO(1)
=\Big(-\dfrac{k^{(m)}(0)}{m!}\Big)^{\frac{2}{m+2}} E_n(S_{m,1})\alpha^{\frac{2}{m+2}}+\cO(\alpha^{\frac{2}{m+3}}).
\end{align*}
The substitution into Theorem \ref{thmeff}(ii) gives, for any $\vartheta\in (\frac{1}{6},\frac{1}{4})$,
\begin{align*}
	E_n(Q_\alpha)&=-\alpha^2-k_*\alpha+\Big(-\dfrac{k^{(m)}(0)}{m!}\Big)^{\frac{2}{m+2}} E_n(S_{m,1})\alpha^{\frac{2}{m+2}}+R_\alpha\\
\text{with }R_\alpha&=\cO\Big(\alpha^{\frac{2}{m+3}}+\alpha^{-\vartheta}\big(
\alpha^{-\frac{1}{2}}\alpha^{\frac{2}{m+3}}+1\big)\alpha^{\frac{2}{2+m}}\Big).
\end{align*}
For $m=1$ one has $R_\alpha= \cO(\alpha^{\frac{1}{2}}+\alpha^{\frac{1}{6}-\vartheta}\alpha^{\frac{2}{3}})$, and for $\vartheta$ close to $\frac{1}{4}$ one obtains $R_\alpha=\cO(\alpha^q)$ for any $q>\frac{7}{12}$.
If $m\ge 2$, then $\alpha^{-\frac{1}{2}}\alpha^{\frac{2}{2+m}}+1=\cO(1)$, and then $R_\alpha=\cO(\alpha^{\frac{2}{m+3}}+\alpha^{\frac{2}{2+m}-\vartheta})=\cO(\alpha^\frac{2}{m+3})$ as $\vartheta\ge \frac{2}{(m+2)(m+3)}\in(0,\frac{1}{10}]$.	
\end{proof}

\end{document}